 \newtheorem{thm}{Theorem}[section]
 \newtheorem{lem}[thm]{Lemma}
 \newtheorem{cor}[thm]{Corollary}
 \newtheorem*{llem}{Reduction Lemma}
 \newtheorem*{defn}{Definition}
 \newtheorem{rem}{Remark}[section]
 \numberwithin{equation}{section}
 \newcommand{\sch}{Schr\"odinger }
 \newcommand{\rk}{\mathbb{R}^K}
 \newcommand{\dif}[1]{\frac{d}{d{#1}}}
 \newcommand{\Real}{\mathbb{R}}
 \newcommand{\Complex}{\mathbb{C}}
 \newcommand{\abs}[1]{\vert#1\vert}
 \newcommand{\norm}[1]{\Vert#1\Vert}
 \newcommand{\Norm}[1]{\left\Vert#1\right\Vert}
 \newcommand{\ti}{\tilde}
 \newcommand{\ut}{\frac {\partial u} {\partial t}}
 \newcommand{\p}{\partial}
 \newcommand{\Ld}{\Lambda}
 \newcommand{\ep}{\epsilon}
 \def\lg{\langle} \def\rg{\rangle}
\begin{document}

\title{\bf Schr\"odinger Soliton from Lorentzian Manifolds}

\author{Chong Song}
\email{songchong@amss.ac.cn}
\address{LMAM, School of Mathematical Sciences,Peking University,
Beijing 100871, P.R. China.}

\author{Youde Wang}
\email{wyd@math.ac.cn}
\address{Academy of Mathematics and System Sciences,Chinese Academy
of Science, Beijing 100190, P.R. China.}
\thanks{Partially supported
by 973 project of China, Grant No. 2006CB805902.}

\subjclass[2000]{58J60, 35L70, 37K25} \keywords{ \sch soliton, \sch
flow, wave map with potential, Killing potential}

\maketitle

\begin{abstract}
In this paper, we introduce a new notion named as \sch soliton. So-called \sch
solitons are defined as a class of special solutions to the \sch
flow equation from a Riemannian manifold or a Lorentzian manifold $M$
into a K\"ahler manifold $N$. If the target manifold
$N$ admits a Killing potential, then the \sch soliton is just a
harmonic map with potential from $M$ into $N$. Especially, if the
domain manifold is a Lorentzian manifold, the \sch soliton is a
wave map with potential into $N$. Then we apply the geometric energy
method to this wave map system, and obtain the local well-posedness
of the corresponding Cauchy problem as well as global existence in $1+1$
dimension. As an application, we obtain the existence of \sch soliton of
the hyperbolic Ishimori system.
\end{abstract}

\section{Introduction}

In this paper we intend to study a class of special solutions of the
\sch flows from a Riemannian manifold or a Lorentzian manifold into
a K\"ahler manifold. First, let us recall some preliminaries on \sch
flows. Let $(M,g)$ be a Riemannian manifold or a Lorenzian manifold
and $(N,h,J)$ be a K\"ahler manifold, where $J$ denotes the complex
structure and $h$ is the K\"ahler metric. The \sch flow is a map
$w:\Real\times M \to N$ which satisfies the equation
\begin{equation}\label{sch}
    \left\{
    \begin{aligned}
    &\frac {\p w}{\p t} = J(w)\tau(w),\\
    &w(0) = w_0.
    \end{aligned}
    \right.
\end{equation}
where $\tau(w) = trace_g\nabla^2w$ is the tension field of $w$, and
$w_0$ is an initial map from $M$ to $N$.

The \sch flow from a Riemannian manifold stems from fluid mechanics
and physics. It is a problem with strong physical backgrounds and a
long history. A century ago Italian mathematician Da Rios studied
the motion behavior of vortex filament and discovered the well-known
Da Rios equation which can be formulated as
 $$\gamma_t=\gamma_s\times\gamma_{ss},$$
where $\gamma(s,t): S^1\times \Real\rightarrow \Real^3$ is a closed
space curve for a fixed time $t$. By differentiating the above
equation with respect to $s$ we obtain the so called ferromagnetic
spin chain system which is just the \sch flow into $S^2$.  For the
existence theory of \sch flow from a Riemannian manifold, we refer
to \cite{BIKT, Uh, Q, D, DW, PWW, RS, SSB} and references therein.
Yet, for the \sch flow from Lorentzian manifolds, little is known.
In 1984, Ishimori~\cite{I} proposed a model as a 2 dimensional
analogue of the classic continuous isotropic Heisenberg spin chain,
which also describes the evolution of a system of static spin
vortices in the plane. The hyperbolic-elliptic Ishimori problem is a
spin field model with the form:
\begin{equation}\label{e:Ish}
    \left\{
    \begin{aligned}
    \p_t s &= s\times \square s + b(\p_x s \cdot \p_y \phi + \p_y s \cdot \p_x
    \phi),\\
    \Delta \phi &= 2s\cdot(\p_x s\times \p_y s),
    \end{aligned}
    \right.
\end{equation}
where $s:\Real^2\times \Real \to S^2\hookrightarrow\Real^3$,
$\square = \p_x^2 - \p_y^2$, $b\in\Real$ and $\lim_{|x|,|y|\to \infty} s(x,y,t) = (0,0,-1)$.
The Cauchy problem associated to Ishimori system~(\ref{e:Ish}) has
been studied extensively in the past decades, see for
example~\cite{K4, S} and references therein. When $b=0$, this system
gives a simple example of \sch flow from a Lorentzian manifold.

Kenig, Ponce and Vega \cite{K3} have ever studied the following \sch
equation which is analogous to the \sch flow from Lorentzian
manifold:
\begin{equation}
    \left\{
    \begin{aligned}
    &\ut = i\mathscr{L}u + P(u, \nabla u, \bar{u}, \nabla \bar u),\\
    &u(0) = u_0.
    \end{aligned}
    \right.
\end{equation}
where $u = u(t,x)$ is a complex valued function from
$\Real\times\Real^n$, $\mathscr{L}$ is a non-degenerate second-order
operator
$$\mathscr{L} = \sum_{i\le k}\p_{x_i}^2 - \sum_{j>k}\p_{x_j}^2 $$
for some $k\in \{1,\cdots, n\}$, and $P:\Complex^{2n+2} \to \Complex
$ is a polynomial satisfying certain constraints. They proved the
local well-posedness of the above initial value problem in
appropriate Sobolev spaces.

Since it is difficult to establish a general existence theory for
\sch flow from Lorentzian manifolds, we return to looking for some
special solutions. We recall that in \cite{DY} the authors proposed
to study the periodic solutions of the \sch flow in the case where
the target manifold $N$ is a K\"ahler-Einstein manifold with
positive scalar curvature. If the target manifold is just the
standard sphere $S^2$, they employed the well-known symmetric
variational principle to show the existence of some special periodic
solutions to the flow from a closed base surface with convolution
symmetry. In particular, they needed to reduce the \sch flow to a
elliptic equation and established the following lemma on reduction.

\begin{llem}\label{lem:red}
Assume there exists a non-trivial holomorphic Killing vector field
$V$ on $N$, and let $S_t$ be the one-parameter group of holomorphic
isometries generated by $V$ with $S_0 = I$, the identity map. Then
$w(t) = S_t\circ u$ with $u:M\to N$ is a solution to (\ref{sch}) if
and only if $u$ is a solution to the equation
\begin{equation}\label{equ:1}
\tau(u) = -J(u)V(u).
\end{equation}
\end{llem}

\begin{proof}
Directly computing by the definition of tension field, we get
\begin{equation*}
  \tau(w) = \tau(S_t\circ u) = dS_t\circ \tau(u) + \tau(S_t)(du, du).
\end{equation*}
Since $S_t$ is an isomorphism, we have $\tau(S_t) = 0$ and hence
\begin{equation*}
  \tau(w) = dS_t\circ\tau(u).
\end{equation*}

On the other hand,
\begin{equation*}
  w_t = \frac \p {\p t}(S_t\circ u) = V(S_t\circ u) = dS_t\circ V(u).
\end{equation*}
The last equality holds because the single parameter group $S_t$
satisfies $S_t\circ S_s = S_{t+s}$. Differentiating this at $s=0$,
we get $dS_t\circ V = V(S_t)$.

Next, because $V$ is holomorphic, i.e. $[J,\nabla V]=0$, we have
\[J\circ dS_t=dS_t\circ J.\]
Combining above equalities together, we arrive at
\begin{equation}\label{e0}
  w_t = dS_t\circ V(u) = J(w)\tau(w) = J(S_t\circ u)dS_t\circ \tau(u) = dS_t\circ J(u)\tau(u).
\end{equation}
$dS_t$ is an isomorphism on the tangent space, so ~(\ref{e0}) is
equivalent to ~(\ref{equ:1}).
\end{proof}

It is easy to see that the special solution to \sch flow given by
the above lemma is some kind of solitary wave solution. In fact, for
a linear \sch equation defined on a flat torus $\mathbb{T}^m$
 $$iw_t=\Delta w,$$
a solitary wave solution is of the form $w=ue^{ikt}$ where $k$ is a
positive constant, and $v$ is a real function which satisfies the
equation $\Delta u + ku =0$. Here, $e^{ikt}$ can be viewed as a
holomorphic isometric group with one parameter. Therefore, we define
the \sch soliton as follows

\begin{defn}
Suppose $u$ is a solution to (\ref{equ:1}) derived in the Reduction
Lemma, then $w(t) = S_t\circ u$ is called a \sch soliton solution of
(\ref{sch}).
\end{defn}

A solution to equation (\ref{equ:1}) is a map with prescribed
tension field. In general it is hard to solve the equation because
the elliptic system is not of a variational structure. There are
only a few results under some strong assumptions, see \cite{CJ} for
example.

However, if there exists a smooth function $\Ld\in C^\infty(N)$ on
$N$, such that $JV = \nabla \Ld$ is the gradient vector field of
$\Ld$, then the equation becomes
\begin{equation}\label{e7}
  \tau(u) = -\nabla \Ld(u),
\end{equation}
and it's easy to see that this equation is the Euler-Lagrange
equation of the following functional:
\begin{equation}\label{e7'}
  F(u) = E(u) - \int_M \Ld(u) dV_g.
\end{equation}
Here
 $$E(u) = \frac{1}{2}\int_M \abs{\nabla u}^2 dV_g$$
is the energy functional of maps $u \in W^{1,2}(M, N)$, where
${|\nabla u|}^2 = trace_g(u^*h)$. \\

In the case $M$ is a Riemannian manifold, the solutions to
equation (\ref{e7}) are harmonic maps with potential $\Ld$ from $M$
into $N$. Once we have the above variational structure, many
powerful tools which are adopted to study harmonic maps work for the
present problem and many results on harmonic maps can be extended.
For formal results on harmonic maps with potential, we refer to
~\cite{C1,C2,FR,FRR}.

In this paper, however, we focus on the situation where the base
manifold is Lorentzian. It is well-known that the hyperbolic
harmonic maps from a Lorentzian manifold are usually called wave
maps and the well-posedness of wave maps has been intensively
studied by many mathematicians; see for example \cite{SS, ST, T1}
and many references therein. We will see below that the \sch soliton
from a Lorentzian manifold (or Lorentzian \sch soliton for short)
satisfies a perturbed wave map equation. It's worthy pointing out that
this kind of wave map with potential emerges
naturally as a simplified equation of the dynamics of weak
ferromagnets magnetization when $N =S^2$~\cite{HS}.

Indeed, let $(M_1, g_1)$ be a compact Riemannian manifold with the
Riemannian metric $g_1 = g_{\alpha\beta}dx^\alpha dx^\beta$ and $M =
\Real \times M_1^m$ be a Lorentzian manifold equipped with a
Lorentzian metric $g = dt^2 - g_1$. Denote the covariant derivative
for functions on $M_1$ and $M$ by $\nabla$ and $\ti \nabla$
respectively. We will always embed the compact target manifold $N$
into a Euclidean space $\Real^K$.  Then the equation (\ref{equ:1})
becomes
\begin{equation}\label{equ:2}
\square u = A(u)(\ti \nabla u,\ti \nabla u) - J(u)V(u),
\end{equation}
where $\square = \p^2_t - \Delta$ is the wave operator, $\ti \nabla
u = u_t + \nabla u$ and $A(u)(\cdot, \cdot)$ is the second
fundamental form of $N\subset \Real^K$. Using the Christoffel
symbols $\Gamma^k_{ij}$ of $N$, one can write explicitly in local
coordinates that
\[ (A(u)(\ti \nabla u,\ti \nabla u))^k = \Gamma^k_{ij}u^i_t u^j_t -
  g^{\alpha\beta}\Gamma^k_{ij}\nabla_\alpha u^i \nabla_\beta u^j. \]
Equation~(\ref{equ:2}) is a nonlinear wave system. In particular, if
there exists a Killing potential (See Section \ref{killing} for the
definition) $\Ld \in C^\infty(N)$ such that $JV = \nabla \Ld$, the
equation becomes
\begin{equation}\label{equ:2'}
  \square u = A(u)(\ti \nabla u,\ti \nabla u) - \nabla \Ld(u).
\end{equation}
We will call a solution to equation~(\ref{equ:2'}) a wave map with
potential. We will consider initial data
\begin{equation}\label{equ:ini}
(u(0),u_t(0)) = (u_0,u_1); ~~u_1(x) \in T_{u_0(x)}N, \text{~a.e.~}
x\in M_1
\end{equation}
and study the corresponding Cauchy problem. Our main result is the
following theorem:

\begin{thm}\label{thm:sch}
Suppose $(M_1, g_1)$ is an $m$-dimensional compact Riemannian
manifold with $m>1$ and $M = \Real \times M_1$ is equipped with a
Lorentzian metric $g = dt^2 - g_1$, let $N$ be a compact K\"ahler
manifold with a Killing potential $\Lambda$ such that $\nabla \Ld =
JV$. Let $k\ge m_0 = [\frac m 2]+1$, where $[\frac m 2]$ denotes the
integer part of $\frac m 2$. Than for initial maps $u_0\in
W^{k,2}(M_1,N)$ and $u_1\in W^{k-1,2}(M_1,T_{u_0}N)$, the Cauchy
problem (\ref{equ:2'}), (\ref{equ:ini}) has a unique local solution
$u$ satisfying $u\in L^\infty([0,T),W^{k,2}(M_1,N))$ and $u_t\in
L^\infty([0,T),$  $W^{k-1,2}(M_1,T_uN))$. Moreover, if the initial
data is smooth, so is the solution.
\end{thm}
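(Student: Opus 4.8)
The plan is to establish local well-posedness for the perturbed wave map equation~(\ref{equ:2'}) by the standard geometric energy method, adapting the energy estimates for wave maps (as in \cite{ST, T1}) to accommodate the extra potential term $-\nabla\Ld(u)$. First I would set up an approximation scheme: mollify the initial data $(u_0,u_1)$ to smooth data $(u_0^\delta,u_1^\delta)$, project $u_1^\delta$ back onto $T_{u_0^\delta}N$, and solve the corresponding smooth Cauchy problem on a (possibly short) time interval by a fixed-point/Picard iteration in a suitable function space, treating $A(u)(\ti\nabla u,\ti\nabla u)-\nabla\Ld(u)$ as the nonlinearity. Because $N$ is compact and embedded in $\Real^K$, the maps $A$ and $\nabla\Ld$ together with all their derivatives are bounded, so the nonlinearity is a smooth function of $u$ and quadratic in $\ti\nabla u$; this gives short-time existence of smooth solutions and reduces everything to uniform-in-$\delta$ a priori estimates.

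The core of the argument is the energy estimate. For $u$ solving~(\ref{equ:2'}), I would define the $k$-th order energy
\[
  \mathcal{E}_k(t) = \sum_{j=0}^{k}\Big(\Norm{\nabla^j u_t(t)}_{L^2(M_1)}^2 + \Norm{\nabla^{j+1} u(t)}_{L^2(M_1)}^2\Big),
\]
differentiate the equation $j$ times ($0\le j\le k-1$, using $k\ge m_0=[\tfrac m2]+1$ so that $W^{k-1,2}\hookrightarrow L^\infty$ on $M_1$), pair with $\nabla^j u_t$, and integrate by parts over $M_1$. The wave-operator part produces, after integration by parts, the time derivative of $\mathcal{E}_k$ up to commutator terms involving the curvature of $M_1$; the nonlinear terms coming from $A(u)(\ti\nabla u,\ti\nabla u)$ are the usual ones, controlled via Moser-type product and commutator estimates together with the Sobolev embedding $W^{m_0,2}(M_1)\hookrightarrow L^\infty(M_1)$, yielding a bound of the form $C\,\mathcal{E}_k^{3/2}$ or $C(\mathcal{E}_k)\,\mathcal{E}_k$. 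The new potential contribution $\lg\nabla^j\nabla\Ld(u),\nabla^j u_t\rg$ is in fact the easiest: since $\nabla\Ld$ is a fixed smooth vector field on the compact $N$, the chain rule gives $\nabla^j(\nabla\Ld(u))$ as a polynomial in $\nabla u,\dots,\nabla^j u$ with bounded coefficients, which is lower order compared to the $A$-term and is absorbed by the same Moser estimates, producing at most a linear-in-$\mathcal{E}_k$ contribution. Altogether one obtains a differential inequality $\dif{t}\mathcal{E}_k(t) \le C\big(1+\mathcal{E}_k(t)\big)^{3/2}$ (or similar), and Gronwall's lemma then yields a time $T>0$, depending only on $\mathcal{E}_k(0)$ and the geometry of $M_1,N$, on which $\mathcal{E}_k$ stays bounded—uniformly in the mollification parameter $\delta$.

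With the uniform bound in hand I would pass to the limit: the approximate solutions $u^\delta$ are uniformly bounded in $L^\infty([0,T),W^{k,2})$ with $u^\delta_t$ uniformly bounded in $L^\infty([0,T),W^{k-1,2})$, so by weak-* compactness and Aubin–Lions a subsequence converges, strongly in $C([0,T),W^{k-1,2}_{loc})$, to a limit $u$; the strong convergence is enough to pass to the limit in the quadratic nonlinearity and in $\nabla\Ld(u)$, and to check that $u$ takes values in $N$ and that $u_t\in T_uN$ (the latter because differentiating $|u|_N^2\equiv\text{const}$ is preserved). Uniqueness follows from an $L^2$-energy estimate for the difference of two solutions $u,v$ with the same data: setting $\mathcal{D}(t)=\Norm{(u-v)_t}_{L^2}^2+\Norm{\nabla(u-v)}_{L^2}^2$, the equation differences $\square(u-v)$ involve differences of $A(u)(\ti\nabla u,\ti\nabla u)$ and of $\nabla\Ld(u)$, each estimated by $C\mathcal{D}(t)$ using the already-established $W^{k,2}$ bounds and the mean value theorem, so Gronwall forces $\mathcal{D}\equiv 0$. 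Finally, persistence of smoothness is immediate: if the data is smooth, the uniform estimates hold for every $k$, so the solution lies in $\bigcap_k L^\infty([0,T),W^{k,2})$ and is therefore smooth on $[0,T)\times M_1$. The main obstacle I anticipate is purely technical rather than conceptual: organizing the higher-order commutator and Moser estimates on a general compact Riemannian manifold $M_1$ (rather than on $\Real^m$ or $\mathbb{T}^m$), where one must keep track of curvature terms of $g_1$ when commuting $\nabla^j$ past $\Delta$ and past the nonlinearity; the potential term itself introduces no genuine difficulty.
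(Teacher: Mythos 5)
Your proposal shares the overall architecture with the paper's proof (approximate, derive uniform higher-order energy estimates, close via Gronwall, pass to a weak limit, prove $L^2$-uniqueness), but it diverges in two substantive methodological choices, and as written it has two real gaps.

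\textbf{Different route.} The paper does not attack the hyperbolic problem directly by Picard iteration; it introduces a viscous (parabolic) regularization $D_t^2 u - \Delta u - \epsilon\Delta u_t = -J(u)V(u)$, for which smooth local solutions are constructed by a heat-kernel fixed-point argument, and then derives $\epsilon$-uniform a priori bounds to pass to the limit $\epsilon\to 0$. More importantly, the paper works \emph{intrinsically}: all higher-order energies are measured by the covariant derivative $D$ on the pull-back bundle $u^*TN$, and the key analytic input is the Ding--Wang Gagliardo--Nirenberg inequality for bundle sections (Theorem~\ref{thm:int}), together with the equivalence Lemma~\ref{lem:equ} between $\|Du\|_{H^{k-1,2}}$ and $\|\nabla u\|_{W^{k-1,2}}$. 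You instead propose extrinsic energies built from $\nabla^j u$ in $\Real^K$. Both are reasonable, but the intrinsic formulation is not a cosmetic choice here: because $D$ stays tangent to $N$, the second fundamental form never enters the covariant energy identity, and the "error" terms are curvature contractions $Q$ with a cleaner index count. This structural gain is precisely what lets the paper close the estimates at the threshold regularity.

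\textbf{Gap 1: the borderline case $k=m_0$.} You justify the nonlinear estimates by "$W^{k-1,2}\hookrightarrow L^\infty$ on $M_1$." That embedding requires $k-1>m/2$, i.e.\ $k\ge m_0+1$; it fails at $k=m_0=[\tfrac m2]+1$ (since $m_0-1=[\tfrac m2]\le m/2$). Your scheme as described therefore proves the theorem only with one extra derivative. The paper gets the full range $k\ge m_0$ through Lemma~\ref{lem:7}: after the viscous term is dropped, the curvature terms $\tilde Q$ in the covariant energy identity satisfy the improved index constraint $j_1+\cdots+j_b+b\le l+2$ rather than $l+3$, which buys exactly one derivative and closes the estimate at the $m_0-1$ level for $\tilde Du$ (hence $m_0$ for $u$). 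Your extrinsic formulation does not exhibit this improvement, and you would need some substitute.

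\textbf{Gap 2: $C^\infty$ persistence.} You deduce smoothness by intersecting $L^\infty([0,T),W^{k,2})$ over all $k$, but in your sketch the time $T$ is produced by a Gronwall argument for $\mathcal{E}_k$, hence a priori depends on $k$, and the intersection could be empty. The paper is explicit that its Lemma~\ref{lem:6} yields a time $T$ depending only on $\|\nabla u_0\|_{H^{m_0,2}}$ and $\|u_1\|_{H^{m_0,2}}$, uniformly for all $k\ge m_0$: once the $H^{m_0,2}$ norm of $\tilde Du$ is bounded on $[0,T]$, the higher-order inequalities (\ref{eq:17}) become linear in $\|s\|^2_{H^{k,2}}$ and propagate on the \emph{same} interval by Gronwall. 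You should state and prove this hierarchical structure explicitly; without it, "the uniform estimates hold for every $k$" does not imply regularity on a common interval.

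The potential term $\nabla\Ld(u)$ is indeed lower order and harmless, as you say; that part of your discussion matches the paper. The uniqueness argument you sketch is essentially the one in Theorem~\ref{thm:unique}.
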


\begin{rem}
Although for the sake of consistency with the \sch soliton, we only
discuss wave maps with Killing potentials in this paper, by exactly
the same procedure one can verify that Theorem~\ref{thm:sch} holds
for wave maps with any potential $\Ld$, i.e. for any smooth function
$\Ld:N\to \Real.$
\end{rem}

In the classical wave map theory, it has been shown that the Cauchy
problem of wave map is locally well-posed on Minkowski space
$\Real\times\Real^n$ with initial data $(u_0, u_1)\in
W^{k,2}(\Real^n,N)\times W^{k-1,2}(\Real^n,T_{u_0}N)$, where $k =
\frac{n+1}2$ for $n\ge 3$ and $k>\frac 32$ for $n=2$.(See
Theorem~7.2 in~\cite{SS}.) On the other hand, the
$C^\infty$-regularity of wave equations is well-known by the theory
of paradifferential operators. Thus Theorem~\ref{thm:sch} is a
generalization of the well-known results for wave maps to the
current perturbed wave map system on Lorentzian manifolds. Note that
$m_0$ is the critical exponent on the manifold $M$, since there are
no fractional Sobolev spaces on manifolds.

This generalization won't take much effort since the perturbing term is of
lower order. However, in this paper, we employ a new method, namely, the geometric
energy method which first appeared in Ding and Wang's
work~\cite{DW} to tackle this problem. It's worthy to point out that the geometric energy
method is a powerful tool in dealing with various kinds of geometric
evolution equations. It's also the first time shown in this paper that
the wave map (with potential) can be handled by this method. It
provides a simplified and uniform method which avoids the
complicated analysis of fixing moving frames, choosing Columb gauge,
etc. (See ~\cite{SS1} for example.)

Another advantage of this method is that we can directly obtain the
$C^\infty$-regularity of the solution to the Cauchy problem with
smooth initial data. The fact that the Cauchy
problem is locally well-posed with initial data in $W^{k,2}$ for all
$k\ge m_0$ dose not directly imply the local well-posedness in
$C^\infty$. Because the space of smooth maps
$C^\infty(M_1,N)= \cap_{k=m_0}^\infty W^{k,2}(M_1,N)$ itself is not
a Banach space, and the standard techniques such as fixed point
theory do not apply here. Our method provides an uniform lower bound of
the maximal time $T_k$ for all $k\ge m_0+1$, see Lemma~\ref{lem:6} below.
With this bound, we are able to assert the existence of a local
solution $u\in C^\infty([0,T)\times M_1, N)$ to the Cauchy problem
with smooth initial data. Moreover, the maximal time $T$ only
depends on the geometry of $N$, $\norm{u_0}_{W^{m_0+1,2}}$ and
$\norm{u_1}_{W^{m_0,2}}$, see Theorem~\ref{thm:smooth}.

In addition, we prove the global existence of solution to the
Cauchy problem~(\ref{equ:2'}), (\ref{equ:ini}) on $1+1$ dimensional
Lorentzian manifolds. This is an analogous result to the wave map
theory, see \cite{G} and \cite{SS}.

\begin{thm}\label{thm:global}
Let $M_1\equiv S^1$ and $N$ be a compact Riemannian manifold.
Suppose $\Lambda$ is a smooth function on $N$ and $u_0\in
W^{2,2}(S^1,N)$, $u_1\in W^{1,2}(S^1,T_{u_0}N)$, then the Cauchy
problem (\ref{equ:2'}), (\ref{equ:ini}) has a unique global solution
$u\in L^\infty(\Real^+,W^{2,2}(S^1, N))$.
\end{thm}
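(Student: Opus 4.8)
The plan is to establish global existence by combining the local existence theory (Theorem \ref{thm:sch}, applied with $m=1$, $k=2$) with a uniform a priori bound on the $W^{2,2}$-norm of the solution that prevents blow-up in finite time. Since the local solution exists on a maximal interval $[0,T_{\max})$ whose length is controlled from below by $\norm{u}_{W^{2,2}(S^1,N)}+\norm{u_t}_{W^{1,2}(S^1,T_uN)}$, it suffices to show that this quantity stays finite on any bounded time interval. First I would record the basic energy identity: since $N$ is embedded in $\Real^K$ and $u$ solves \eqref{equ:2'}, differentiating the energy $\mathcal{E}_0(t) = \frac12\int_{S^1}(\abs{u_t}^2 + \abs{\nabla u}^2)\,dx + \int_{S^1}\Ld(u)\,dx$ and using that the second fundamental form term $A(u)(\ti\nabla u,\ti\nabla u)$ is orthogonal to $T_uN$ while $u_t\in T_uN$, one gets $\frac{d}{dt}\mathcal{E}_0(t)=0$. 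Because $\Ld$ is a smooth, hence bounded, function on the compact manifold $N$, conservation of $\mathcal{E}_0$ yields a uniform bound $\norm{u_t(t)}_{L^2}+\norm{\nabla u(t)}_{L^2}\le C$ for all $t\ge 0$, depending only on the initial data and on $\max_N\abs{\Ld}$.

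Next I would upgrade this $W^{1,2}$-type control to the $W^{2,2}$ level by a higher-order energy estimate. Define $\mathcal{E}_1(t) = \frac12\int_{S^1}(\abs{\nabla u_t}^2 + \abs{\nabla^2 u}^2)\,dx$, or more precisely a geometric second-order energy adapted to the moving target (using covariant derivatives along $u$), which is the natural quantity in the geometric energy method of \cite{DW}. Differentiating in $t$ and integrating by parts, the principal terms cancel by the wave structure, and one is left with integrals of schematic form $\int_{S^1} \nabla(\text{lower order})\cdot\nabla u_t$, where the lower-order terms are polynomial in $u$, $\ti\nabla u$ coming from $A(u)(\ti\nabla u,\ti\nabla u)$, together with the potential contribution $\nabla\bigl(\nabla\Ld(u)\bigr)=\nabla^2\Ld(u)(\nabla u,\cdot)$. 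In one space dimension the key simplification is the Sobolev embedding $W^{1,2}(S^1)\hookrightarrow L^\infty(S^1)$: the $L^2$-bound on $\nabla u$ does \emph{not} by itself control $\norm{u_t}_{L^\infty}$ or $\norm{\nabla u}_{L^\infty}$, but interpolation gives $\norm{f}_{L^\infty}^2\le C\norm{f}_{L^2}\norm{f}_{W^{1,2}}$, so each cubic term can be bounded by $\mathcal{E}_0^{1/2}\cdot\mathcal{E}_1$ up to constants, and the potential term by $C\,\mathcal{E}_0^{1/2}\mathcal{E}_1^{1/2}$. This produces a differential inequality $\frac{d}{dt}\mathcal{E}_1 \le C(1+\mathcal{E}_1)$ with $C$ depending only on $\mathcal{E}_0$ and the geometry of $N$, and Gronwall's inequality then gives $\mathcal{E}_1(t)\le (\mathcal{E}_1(0)+1)e^{Ct}$, finite for every finite $t$.

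Having bounded $\mathcal{E}_0+\mathcal{E}_1$, hence $\norm{u(t)}_{W^{2,2}}+\norm{u_t(t)}_{W^{1,2}}$, uniformly on $[0,T]$ for each $T<\infty$, I would conclude by a standard continuation argument: if $T_{\max}<\infty$ were the maximal existence time, the a priori bound would show $\norm{u(t)}_{W^{2,2}}+\norm{u_t(t)}_{W^{1,2}}$ stays below a fixed constant as $t\uparrow T_{\max}$, so by Theorem \ref{thm:sch} the solution extends past $T_{\max}$, a contradiction. Uniqueness in the stated class follows from the uniqueness assertion in Theorem \ref{thm:sch}. I expect the main obstacle to be the second-order energy estimate: one must choose the higher-order geometric energy so that, after integration by parts, the top-order terms genuinely cancel (this is where the identity $u_t\perp A(u)(\cdot,\cdot)$ and the Kähler/Killing structure of $\nabla\Ld=JV$ are used to handle curvature commutators), leaving only cubic remainders that are tame in one space dimension. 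The potential term is strictly lower order and contributes nothing essential beyond the constant in the Gronwall estimate, consistent with the remark in the paper that the perturbation is harmless.
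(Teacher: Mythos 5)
Your proposal is correct and follows essentially the same route as the paper: bound the first-order energy $\norm{\tilde D u}_{L^2}$ via the conservation/inequality for $E(t)+\int_{S^1}\Ld(u)$, differentiate the equation once in space, exploit $\lg u_t, A(u)(\cdot,\cdot)\rg=0$ to rewrite the top-order second-fundamental-form term after integrating by parts, control the resulting quartic term $\int|\tilde D u|^3|\nabla\tilde D u|$ by the one-dimensional Gagliardo--Nirenberg inequality (the paper uses $\norm{\tilde D u}_{L^6}\le\norm{\nabla\tilde D u}_{L^2}^{1/3}\norm{\tilde D u}_{L^2}^{2/3}$, you use the equivalent $L^\infty$ interpolation), obtain a linear Gronwall inequality $\frac{d}{dt}\norm{\nabla\tilde D u}_{L^2}^2\le C\norm{\nabla\tilde D u}_{L^2}^2$, and close with a continuation argument through Theorem~\ref{thm:sch} and the uniqueness Theorem~\ref{thm:unique}. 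One small clarification to your expectation at the end: no K\"ahler/Killing structure is actually needed in the second-order estimate here---the theorem is stated for an arbitrary compact Riemannian $N$ and arbitrary smooth $\Ld$, and the only structural input beyond the wave-map cancellation $u_t\perp A(u)(\cdot,\cdot)$ is the boundedness and smoothness of $\Ld$ on the compact target, which feeds into the zeroth-order energy bound and the lower-order term $\int|\tilde D u||\nabla\tilde D u|$.
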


Therefore, in this case by Reduction Lemma, we get a \sch soliton
solution $w(t) = S_t\circ u$ to the \sch flow~(\ref{sch}) with
initial data $w(0) = u$, which is a special global solution.

\begin{cor}\label{thm:soliton}
Let $M=\mathbb{R}\times S^1$ be a Lorentzian manifold and $N$ a
compact K\"ahler manifold with a Killing potential $\Lambda$. Then
given $u_0\in W^{2,2}(S^1,N)$ and $u_1\in W^{1,2}(S^1,T_{u_0}N)$,
there exists a \sch soliton solution of~(\ref{sch}).
\end{cor}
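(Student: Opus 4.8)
The plan is to obtain the desired \sch soliton by feeding the global solution furnished by Theorem~\ref{thm:global} into the Reduction Lemma; the argument is essentially an assembly of results already in hand, so I will organize it in three short steps and then isolate the one genuinely nontrivial ingredient.

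First I would unpack the hypothesis. By the definition of a Killing potential (Section~\ref{killing}), the assumption $\nabla\Ld = JV$ forces $V$ to be a holomorphic Killing vector field on $N$; since $N$ is compact, $V$ is complete and hence generates a one-parameter group $\{S_t\}_{t\in\Real}$ of holomorphic isometries of $N$ with $S_0 = I$, which is exactly the setting required by the Reduction Lemma. Moreover $\Ld\in C^\infty(N)$ because $N$ is a compact K\"ahler manifold, so Theorem~\ref{thm:global} applies.

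Second, I would apply Theorem~\ref{thm:global} with $M_1 = S^1$ to the data $u_0\in W^{2,2}(S^1,N)$ and $u_1\in W^{1,2}(S^1,T_{u_0}N)$, obtaining a unique global solution $u\in L^\infty(\Real^+,W^{2,2}(S^1,N))$ of the Cauchy problem~(\ref{equ:2'}), (\ref{equ:ini}). Viewing $u$ as a map from (a time half of) the Lorentzian manifold $M=\Real\times S^1$ with $g = dt^2 - g_1$, and invoking the coordinate computation recorded just before Theorem~\ref{thm:sch} — which identifies $\square u = A(u)(\ti\nabla u,\ti\nabla u) - \nabla\Ld(u)$ with $\tau(u) = -J(u)V(u)$ precisely when $\nabla\Ld = JV$ — this $u$ is a solution of equation~(\ref{equ:1}) on $M$. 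If a solution defined for all Lorentzian times is wanted, one extends $u$ to negative time by solving~(\ref{equ:2'}) backward, using the invariance of $\square$ under $t\mapsto -t$.

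Third, I would invoke the Reduction Lemma. Since $S_t$ is a holomorphic isometry, the map $v\mapsto S_t\circ v$ preserves $W^{2,2}(S^1,N)$, so $w(t):=S_t\circ u$ is a well-defined curve of maps with the same regularity in $x$, and the three elementary identities used in the proof of the Reduction Lemma ($\tau(S_t)=0$, $dS_t\circ V = V(S_t)$, $J\circ dS_t = dS_t\circ J$) continue to hold because $S_t$ is smooth, hence remain valid when composed with $u$ in the weak sense. Therefore $w(t) = S_t\circ u$ solves the \sch flow~(\ref{sch}) with $w(0) = u$, and by the Definition of \sch soliton $w(t)$ is a \sch soliton solution of~(\ref{sch}), which is the assertion of the corollary. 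The main — indeed essentially the only — obstacle is concentrated in Theorem~\ref{thm:global}; everything else is bookkeeping about the isometry group $S_t$ and the fact that composing with a fixed smooth isometry does not change Sobolev regularity on $S^1$. One small point worth keeping honest is the compatibility of initial data: the \sch flow automatically satisfies $w_t\in TN$, which is consistent with the constraint $u_1(x)\in T_{u_0(x)}N$ built into~(\ref{equ:ini}), so no condition on $(u_0,u_1)$ beyond that of Theorem~\ref{thm:global} is required.
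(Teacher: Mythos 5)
Your proposal is correct and follows the same route the paper takes: apply Theorem~\ref{thm:global} to obtain a global wave map with potential $u$ on $\mathbb{R}\times S^1$, then invoke the Reduction Lemma to conclude that $w(t)=S_t\circ u$ is a \sch soliton solution of~(\ref{sch}). The paper records this in a single sentence immediately following the statement of Theorem~\ref{thm:global}; your extra remarks (completeness of $V$ on the compact target, backward-in-time extension of the wave map, and invariance of $W^{2,2}$ under composition with a fixed smooth isometry) merely make explicit the bookkeeping the paper leaves implicit.
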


\begin{rem}
Theorem~\ref{thm:sch} also holds for Lorentzian space
$M=\Real^{n+1}$. Similarly, Theorem~\ref{thm:global} and
Corollary~\ref{thm:soliton} hold true if we replace $S^1$ by
$\Real^1$. One may check this by following the argument in
\cite{DW}. The crucial fact is that the interpolation inequality in
Theorem~\ref{thm:int} is scaling invariant, and hence the main
estimate in Lemma~\ref{lem:6} does not depend on the diameter of the
domain.
\end{rem}

Particularly, since there is an natural Killing potential on $S^2$(see
Section~2), Corollary~\ref{thm:soliton} provides a global solution
to the Ishimori system~(\ref{e:Ish}) when $b=0$. Indeed, if $S^2 \in
\Real^3$ is the standard sphere and $\Ld(u) = u_3$ denotes the
projection of $u = (u_1,u_2,u_3)\in S^2$ to the $z$-axis in
$\Real^3$, then $\Ld$, which is the first eigenfunction of $S^2$, is
a Killing potential. The gradient field of $\Ld$ is given by
\[ \nabla \Ld(u) = P(u)e_3, \]
where $e_3 =(0,0,1)$ is the unit vector and $P(u)$ denotes the
orthogonal projection of $\Real^3$ to $T_uS^2$. Precisely,
\[ P(u)e_3 = e_3 - (u,e_3)u. \]
The complex structure of $S^2$ is $J(u) = u\times$. Thus the
corresponding Killing field $V$ is
\[ V(u) = -J(u)\nabla \Ld(u) = -u\times P(u)e_3 = -u\times e_3, \]
and the isometry family $S_t$ is just a rotation around the
$z$-axis. Then if $s(t,x,y) = S_t\circ u(x,y)$, by Reduction Lemma,
the equation of $s$ in Ishimori system~(\ref{e:Ish}) can be reduced
to
\begin{equation}\label{e00}
\square u = (|\p_x u|^2-|\p_y u|^2)u - P(u)e_3.
\end{equation}
Thus given initial data $(u_0,u_1)$, we get a solution $u$ on
Lorentzian space $\Real^{1+1}$ to equation~(\ref{e00}) by
Theorem~\ref{thm:global}. Hence $s(x,y,t) = S_t\circ u(x,y)$ gives a
global periodic solution to the initial value problem of Ishimori
system~(\ref{e:Ish}) with $s(0) = u$. We call such a solution $s$ 
a \sch soliton solution of Ishimori system~(\ref{e:Ish}). Therefore,
we obtain the following corollary.

\begin{cor}
If $b=0$, then given $u_0\in W^{2,2}(\Real^1,N)$ and $u_1\in
W^{1,2}(\Real^1,T_{u_0}N)$, there exists a \sch soliton solution of
Ishimori system~(\ref{e:Ish}).
\end{cor}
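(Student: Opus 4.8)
The plan is to read this off directly from the Reduction Lemma together with Theorem~\ref{thm:global}, once the $b=0$ Ishimori system is recognised as a \sch flow into $S^2$. When $b=0$, system~(\ref{e:Ish}) collapses to $\p_t s = s\times\square s$ with $\square=\p_x^2-\p_y^2$, which is exactly equation~(\ref{sch}) for the \sch flow of maps $s:\Real\times M\to S^2$, where $M=\Real^{1+1}$ is the $1+1$-dimensional Lorentzian space with coordinates $(x,y)$ and metric $dx^2-dy^2$ (so $M=\Real\times M_1$ with $M_1=\Real^1$) and $t$ is the flow parameter. I would then invoke the Killing structure on $S^2$ recalled in the introduction: $\Ld(u)=u_3$ is a Killing potential with $\nabla\Ld(u)=P(u)e_3$, the associated Killing field is $V(u)=-J(u)\nabla\Ld(u)=-u\times e_3$, and its one-parameter group $S_t$ is rotation about the $z$-axis.

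With this $V$, the Reduction Lemma says that a map of the form $s(t)=S_t\circ u$ solves the \sch flow if and only if $u$ solves $\tau(u)=-J(u)V(u)$; after embedding $S^2\hookrightarrow\Real^3$ this is precisely the wave map with potential equation~(\ref{e00}), i.e.\ an instance of~(\ref{equ:2'}) with domain $M=\Real^{1+1}$, target $N=S^2$ and potential $\Ld=u_3$. It then remains only to solve~(\ref{e00}): since $S^2$ is a compact Riemannian manifold and $u_3$ is smooth, Theorem~\ref{thm:global}, in the form valid over $\Real^1$ as noted in the remark following Corollary~\ref{thm:soliton}, yields for the prescribed data $u_0\in W^{2,2}(\Real^1,S^2)$, $u_1\in W^{1,2}(\Real^1,T_{u_0}S^2)$ a unique global solution $u\in L^\infty(\Real^+,W^{2,2}(\Real^1,S^2))$. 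Setting $s(t,x,y)=S_t\circ u(x,y)$ and running the Reduction Lemma in reverse then gives a global solution of the $b=0$ Ishimori system with $s(0)=u$, which is by definition a \sch soliton solution.

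The only points demanding care are bookkeeping rather than analysis. One must verify that the reduction of~(\ref{e:Ish}) to~(\ref{e00}) is literally a special case of~(\ref{equ:2'}), so that Theorem~\ref{thm:global} applies without modification; and one should check that the asymptotic normalisation $\lim_{|x|,|y|\to\infty}s=(0,0,-1)$ built into the Ishimori model is preserved under the construction. The latter holds because $(0,0,-1)$ is a fixed point of every rotation $S_t$ about the $z$-axis, so provided the Sobolev classes $W^{2,2}(\Real^1,S^2)$ are understood relative to the basepoint $(0,0,-1)$, the soliton $s=S_t\circ u$ inherits the correct spatial limit from $u$. Beyond this there is no genuine obstacle, since all the substance is already contained in Theorem~\ref{thm:global}.
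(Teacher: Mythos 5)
Your proof is correct and follows the paper's own argument exactly: identify the $b=0$ Ishimori system as a \sch flow from the $1+1$-dimensional Lorentzian space into $S^2$, use the Killing potential $\Lambda(u)=u_3$ with Killing field $V(u)=-u\times e_3$ and rotation group $S_t$ about the $z$-axis, apply the Reduction Lemma to reduce to the wave map with potential equation~(\ref{e00}), solve it globally via Theorem~\ref{thm:global} in the $\Real^1$ form noted in the remark, and set $s(t,x,y)=S_t\circ u(x,y)$. The only small addition beyond what the paper writes is your check that $(0,0,-1)$ is fixed by every $S_t$ so the Ishimori boundary condition is preserved; the paper leaves this implicit, but your observation is correct.
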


The rest of the this paper is organized as follows: in Section 2 we
briefly introduce the Killing potential; in Section 3 we prove
Theorem~\ref{thm:sch}; finally we prove Theorem~\ref{thm:global} and
hence Corollary~\ref{thm:soliton} and 1.4 in Section 4.

\section{Killing potential and some remarks}\label{killing}

We know that the \sch soliton equations are not of variational
structure generally. So, it is very difficult to solve
(\ref{equ:1}), since the classical variational methods can not be
used to approach this problem. In fact, it may do not admit any
solution at all. Then a natural question is: \emph{when dose the
equation ~(\ref{equ:1}) have a variational structure?} One has found
the question relates closely to whether a K\"ahler manifold admits a
Killing potential function or not. Therefore, let's recall the
notion of Killing potential as follows.

\begin{defn}
If $\Ld$ is a smooth function on a K\"ahler manifold $(N,J)$, and the gradient field of $\Ld$ has the form:
\[ \nabla \Ld = JV, \]
where $V$ is a Killing field on $N$, then $\Ld$ is called a Killing potential.
\end{defn}

Obviously, if there exists a Killing potential on $(N, J)$, then
(\ref{equ:1}) is of the desired variational structure. Now, a
question confronting us is what kind of manifolds do admit Killing
potentials? Fortunately, one has made great progress on the
existence of Killing potentials on a K\"ahler manifold in
differential geometric field. Recently, Derdzinski and Maschler
studied the so-called special K\"ahler-Ricci potentials which is a
special kind of Killing potential, and gave a local classification
for the K\"ahler manifolds admitting such potentials. It's also
related to the conformally-Eintein K\"ahler metrics. One can refer
to \cite{DM1, DM2, Je} for more details.

For completeness, here we give several basic lemmas about Killing
potential.

\begin{lem}\label{f2} (\cite{DM2})
  Suppose $\Ld$ is a smooth function on a K\"ahler manifold, then the following conditions are equivalent:
  i) $\Ld$ is a Killing potential; ii) $\nabla \Ld$ is a holomorphic vector field; iii) $\nabla^2\Ld$ is Hermitian.
\end{lem}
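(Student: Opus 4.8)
The plan is to reduce the equivalence to two classical facts about K\"ahler manifolds and then close the cycle $\text{(i)}\Rightarrow\text{(iii)}\Rightarrow\text{(ii)}\Rightarrow\text{(i)}$ by pointwise linear algebra on the fibres of $TN$. Throughout I would use only that the Levi--Civita connection of a K\"ahler manifold satisfies $\nabla J=0$, that $J$ is skew-adjoint with respect to $h$ (i.e. $h(JY,Z)=-h(Y,JZ)$, which follows from $h(JY,JZ)=h(Y,Z)$), and that $J^2=-\mathrm{id}$.

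First I would record the two facts. (a) For any vector field $X$ on a K\"ahler manifold, a torsion-free computation using $\nabla J=0$ gives $\mathcal{L}_X J=[J,\nabla X]$, where $\nabla X$ denotes the endomorphism $Y\mapsto\nabla_Y X$; hence $X$ is (real) holomorphic, $\mathcal{L}_X J=0$, if and only if $\nabla X$ commutes with $J$. (b) $V$ is a Killing field if and only if $\nabla V$ is skew-adjoint with respect to $h$ (the infinitesimal form of the isometry condition). I would also note that for the Hessian endomorphism $H:=\nabla(\nabla\Ld)$, which is self-adjoint, the bilinear form $\nabla^2\Ld$ is Hermitian, i.e. $\nabla^2\Ld(JY,JZ)=\nabla^2\Ld(Y,Z)$, precisely when $[H,J]=0$; this is immediate from $J$ being an isometry and $J^2=-\mathrm{id}$.

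With these in hand I would dispatch the three implications as follows. Since $H=\nabla X$ for $X=\nabla\Ld$, fact (a) says ``$\nabla\Ld$ holomorphic'' is the same as ``$[H,J]=0$'', which by the remark above is the same as ``$\nabla^2\Ld$ Hermitian''; this settles $\text{(ii)}\Leftrightarrow\text{(iii)}$. For $\text{(iii)}\Rightarrow\text{(i)}$ I would set $V:=-J\nabla\Ld$, so $JV=\nabla\Ld$ because $J^2=-\mathrm{id}$; then $\nabla V=-JH$ by $\nabla J=0$, and $(-JH)^{*}=-H^{*}J^{*}=HJ=JH=-(-JH)$ using $H$ self-adjoint, $J^{*}=-J$, and $[H,J]=0$, so $\nabla V$ is skew-adjoint and $V$ is Killing, i.e. $\Ld$ is a Killing potential. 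For $\text{(i)}\Rightarrow\text{(iii)}$, if $\nabla\Ld=JV$ with $V$ Killing then $H=\nabla(JV)=J\nabla V$; self-adjointness of $H$ together with skew-adjointness of $\nabla V$ gives $J\nabla V=H=H^{*}=(J\nabla V)^{*}=(\nabla V)^{*}J^{*}=(\nabla V)J$, whence $[H,J]=[J\nabla V,J]=J(\nabla V)J+\nabla V=J(J\nabla V)+\nabla V=-\nabla V+\nabla V=0$, so $\nabla^2\Ld$ is Hermitian.

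The only genuinely geometric input is the identity $\mathcal{L}_X J=[J,\nabla X]$ in fact (a); deriving it (expand $[X,JY]$ and $J[X,Y]$ with a torsion-free connection and use $\nabla J=0$) is the part I expect to require the most care, although it is classical and could simply be cited from, e.g., \cite{DM2}. Everything else is bookkeeping with the relations $J^{*}=-J$ and $J^2=-\mathrm{id}$.
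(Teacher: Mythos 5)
Your proof is correct and takes essentially the same approach as the paper: both reduce the statement to pointwise linear algebra on the endomorphisms $\nabla V$, $\nabla(\nabla\Ld)$, and $J$, using the Killing condition as skew-adjointness of $\nabla V$, self-adjointness of the Hessian, $J^*=-J$, $\nabla J=0$, and the characterization of holomorphic fields by $[\nabla X,J]=0$. You close a slightly different cycle of implications (the paper does (i)$\Leftrightarrow$(ii) then (i)$\Leftrightarrow$(iii)), but the underlying computations are the same.
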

\begin{proof}
  Let $V = -J\nabla \Ld$, then $\Ld$ is a Killing potential is equivalent to say $V$ is a Killing potential, which means $\nabla V$ is skew-symmetric,
  i.e.
  \begin{equation}\label{f1}
    (\nabla V)^* + \nabla V = 0.
  \end{equation}
  Since $\nabla V = -J\nabla^2\Ld$, $(\nabla^2\Ld)^* = \nabla^2\Ld$ and $J^* = -J$, (\ref{f1}) is equivalent to
  \[ \nabla^2\Ld\circ J - J\circ \nabla^2\Ld = [\nabla^2\Ld, J] = 0, \]
  which means $\nabla \Ld$ is holomorphic. Thus i) and ii) are equivalent.
  On the other hand, if $\nabla^2\Ld$ is Hermitian, i.e. $\nabla^2\Ld(X,JY) = -\nabla^2\Ld(JX,Y)$ for any vector fields $X,Y$.
  Then
  \[ \nabla^2\Ld(X,JY) = \lg X, \nabla_Y J\nabla \Ld \rg = -\nabla^2\Ld(JX,Y) = -\lg \nabla_X J\nabla \Ld, \nabla_Y \rg. \]
  This is equivalent to the skew-symmetry of $V = -J\nabla \Ld$, which is equivalent to i).
\end{proof}

\begin{lem}(\cite{DM2})
Suppose $(N,h,J)$ is a K\"ahler manifold. If $H_1(N,\Real) = 0$,
then for every holomorphic Killing field $V$ there exists a Killing
potential $\Ld$, such that $\nabla \Ld = JV$.
\end{lem}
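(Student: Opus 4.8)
The plan is to exhibit $JV$ as the metric dual of an exact $1$-form. Let $\Omega$ denote the K\"ahler form of $N$, $\Omega(X,Y)=h(JX,Y)$, and consider the $1$-form
\[ \omega := \iota_V\Omega = h(JV,\,\cdot\,), \]
the second equality being $\Omega(V,X)=h(JV,X)$. This $\omega$ is precisely the $1$-form metrically dual to the vector field $JV$, so finding $\Ld\in C^\infty(N)$ with $\nabla\Ld=JV$ amounts to writing $\omega=d\Ld$; it is here that the cohomological hypothesis will be used.

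First I would verify that $\omega$ is closed. Since $V$ is a Killing field, $\mathcal{L}_V h=0$, and since $V$ is a holomorphic vector field, $\mathcal{L}_V J=0$; hence $V$ preserves the K\"ahler form, $\mathcal{L}_V\Omega=0$. Combining Cartan's formula with $d\Omega=0$ (which holds because $N$ is K\"ahler) gives
\[ 0=\mathcal{L}_V\Omega=d(\iota_V\Omega)+\iota_V(d\Omega)=d\omega, \]
so $\omega$ is closed. (Alternatively one computes directly, using $\nabla J=0$, that $d\omega(X,Y)=h(J\nabla_XV,Y)-h(J\nabla_YV,X)$, which vanishes upon combining the skew-symmetry of $\nabla V$ with the identity $\nabla_{JX}V=J\nabla_XV$ — both being the infinitesimal forms of the Killing and holomorphicity conditions, cf. the proof of Lemma~\ref{f2}.)

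It remains to pass from closed to exact, and this is where $H_1(N,\Real)=0$ enters. The periods $\gamma\mapsto\int_\gamma\omega$ of the closed $1$-form $\omega$ define a homomorphism $H_1(N,\Integer)\to\Real$, which must factor through $H_1(N,\Real)=0$; hence all periods of $\omega$ vanish and $\omega$ is exact, say $\omega=d\Ld$ with $\Ld\in C^\infty(N)$. (Equivalently, by de Rham's theorem $H^1_{dR}(N)\cong\mathrm{Hom}(H_1(N,\Real),\Real)=0$.) Taking metric duals in $\omega=d\Ld$ yields $\nabla\Ld=JV$, and since $V$ is Killing, $\Ld$ is a Killing potential by definition, as desired.

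I do not expect a genuine obstacle here: the whole argument rests on the single observation that a holomorphic Killing field preserves the K\"ahler form, so that $\iota_V\Omega$ is automatically closed, after which the hypothesis $H_1(N,\Real)=0$ closes the gap between closed and exact. The only points calling for some care are the sign bookkeeping in the identity $\iota_V\Omega=(JV)^\flat$ (which uses that $h$ is Hermitian, $h(JX,JY)=h(X,Y)$) and recalling the infinitesimal characterizations of ``Killing'' and ``holomorphic'' in terms of $\nabla V$.
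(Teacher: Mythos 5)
Your argument is correct, and it takes a mildly different (arguably cleaner) route to the key step than the paper does. The paper works directly with $W=JV$: it notes that $\nabla V$ is skew and commutes with $J$, hence $\nabla W=J\nabla V$ is symmetric, and then reads off $d\xi=0$ from the formula $d\xi(X,Y)=h(\nabla_XW,Y)-h(\nabla_YW,X)$ for $\xi=\iota_Wh$. You instead observe that $\iota_V\Omega$ is the same $1$-form, and obtain closedness conceptually from Cartan's formula: a holomorphic Killing field preserves $\Omega$ and $\Omega$ is closed, so $d(\iota_V\Omega)=\mathcal{L}_V\Omega-\iota_Vd\Omega=0$. These are two ways of encoding the same fact — your parenthetical ``alternatively one computes directly\ldots'' is exactly the paper's computation, modulo noting that symmetry of $\nabla(JV)$ is equivalent to the combination of skewness of $\nabla V$ and $[J,\nabla V]=0$. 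The Lie-derivative phrasing buys you a coordinate-free, sign-robust derivation of closedness without unpacking the $d\xi$ formula; the paper's phrasing is more self-contained given that it has just proved (in Lemma~\ref{f2}) the symmetry of $\nabla^2\Ld$, of which symmetry of $\nabla W$ is the converse side. Both correctly invoke $H_1(N,\Real)=0$ at the end to pass from closed to exact. One very small bookkeeping remark: the paper's definition of Killing potential asks $\nabla\Ld=JV$ with $V$ Killing, so once you have $\nabla\Ld=JV$ you are done by definition; your closing sentence says this, so there is no gap.
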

\begin{proof}
  Since $V$ is Killing and holomorphic, $\nabla V$ is skew symmetric and commutes with $J$. Thus if we let $W = JV$, then $\nabla W$
  is symmetric. This implies the corresponding 1-form $\xi = \iota_W h$ is closed, since
  $$ (d\xi)(X,Y) = h(\nabla_X W, Y) - h(X, \nabla_Y W) $$
  for any vector fields $X,Y$. So there exist a function $\Ld$ such that $d\Ld = \xi$ and hence $\nabla \Ld = W = JV$.
\end{proof}

In fact, the existence of Killing potential is a complicated
problem and somehow related to the topology of the underlying
manifold. The following lemma gives a sufficient condition for the existence of Killing potential:

\begin{lem}
  Let $\Ld$ be a $C^\infty$ funcion on a K\"ahler manifold $(M,g)$ such that
  \begin{equation}\label{e:killing}
   \nabla^2\Ld + \chi Ric = \sigma g,
  \end{equation}
  where $Ric$ is the Ricci tensor, and $\chi, \sigma$ are some $C^\infty$ functions.
  Then $\Ld$ is a Killing potential.
\end{lem}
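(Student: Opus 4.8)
The plan is to reduce everything to the criterion of Lemma~\ref{f2}. By that lemma, $\Ld$ is a Killing potential if and only if its Hessian $\nabla^2\Ld$ is Hermitian, i.e.\ $\nabla^2\Ld(JX,JY)=\nabla^2\Ld(X,Y)$ for all vector fields $X,Y$ (equivalently $[\nabla^2\Ld,J]=0$). Rewriting the hypothesis (\ref{e:killing}) as
\[ \nabla^2\Ld = \sigma\, g - \chi\, Ric, \]
it thus suffices to check that the right-hand side is Hermitian, and since the scalar functions $\sigma,\chi$ play no role in this property, this comes down to the $J$-invariance of $g$ and of $Ric$.

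For the metric this is automatic: on a K\"ahler (indeed on any almost-Hermitian) manifold $J$ is a $g$-isometry, so $g(JX,JY)=g(X,Y)$. For the Ricci tensor we use the defining property $\nabla J=0$ of a K\"ahler metric. This makes the curvature operator commute with $J$, namely $R(X,Y)JZ=JR(X,Y)Z$; feeding this into the definition of $Ric$, using the pair-symmetry $R(X,Y,Z,W)=R(Z,W,X,Y)$ of the Riemann tensor, and noting that $\{Je_i\}$ is again an orthonormal frame whenever $\{e_i\}$ is, one obtains $Ric(JX,JY)=Ric(X,Y)$ after tracing. (Equivalently, the Ricci form $\rho(X,Y):=Ric(JX,Y)$ is of type $(1,1)$.) Consequently $\sigma g-\chi\, Ric$ is a $C^\infty$-linear combination of Hermitian symmetric $2$-tensors, hence itself Hermitian, and therefore so is $\nabla^2\Ld$. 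Condition (iii) of Lemma~\ref{f2} then yields the conclusion.

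Essentially no obstacle is expected here: the only step with genuine content is the $J$-invariance of $Ric$ on a K\"ahler manifold, and that is where I would actually write out the short trace computation starting from $\nabla J=0$; everything else is bookkeeping. It may also be worth remarking that (\ref{e:killing}) is in the spirit of the equations characterizing special K\"ahler--Ricci potentials studied by Derdzinski and Maschler, which is consistent with its solutions being automatically Killing potentials.
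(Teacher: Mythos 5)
Your proposal is correct and follows exactly the same route as the paper: rewrite the hypothesis as $\nabla^2\Ld = \sigma g - \chi\,Ric$, note that $g$ and $Ric$ are Hermitian on a K\"ahler manifold, and conclude via condition iii) of Lemma~\ref{f2}. The only difference is that you spell out the standard trace computation showing $Ric$ is $J$-invariant, which the paper takes as known.
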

\begin{proof}
  It is a direct corollary from iii) of lemma~\ref{f2} and the fact that $Ric$ and $g$ are Hermitian.
\end{proof}

From this lemma, one can see that there are plenty of manifolds
admitting Killing potentials, including special cases of independent
interest. For example, compact K\"ahler manifolds with function
$\Ld$ satisfying (\ref{e:killing}) for constants $\chi, \sigma$ such
that $\chi\sigma>0$ are known as K\"ahler-Ricci solitons (\cite{P},
\cite{TZ}). Also, Riemannian manifolds admitting functions $\Ld$
satisfying (\ref{e:killing}) with $\chi=0$ have been studied
extensively, and their local structure is completely understood in
\cite{K2}.

We know that it is always an important issue that how many closed
geodesics exist on a compact Riemannian manifold. An one-dimensional
\sch solitons from $S^1$ into a compact K\"{a}hler manifold with a
Killing potential $\Lambda$ is a geodesic with potential. Since
$\Lambda$ is closely relevant to the geometry and topology of the
target manifold, it is of significance that we study the existence
of such geodesics. Naturally, we may ask the following
\begin{quote}
{Question 1:} \emph{At least how many closed geodesics with
potential $\Lambda$ exist on a closed K\"ahler manifold with Killing
potential?}
\end{quote}

On the other hand, we should mention another important special case.
When $N$ is a compact K\"ahler-Einstein manifold with positive
scalar curvature, it is known that for every Killing field $V$,
$JV=\nabla \Ld_1$ is the gradient vector field of the first
eigenfunction $\Ld_1$ of the Laplace-Beltrami operator $\Delta_N$ on
$N$ (\cite{K1}). By virtue of this fact and Sacks-Uhlenbeck's
perturbed technique, Ding and Yin \cite{DY} proved there exists an
infinite number of inequivalent periodic solutions to the \sch flow
(periodic \sch solitons) from $S^2$ into $S^2$ (see also \cite{HW}).
In this case the potential function in the above Question 1 is just
the first eigenfunction on $N$. In fact, more generally we may
consider the following
\begin{quote}
{Question 2:} \emph{Let $N$ be a closed Riemannian manifold and
$\Lambda_1(x)$ be the first eigenfunction of the Laplace-Beltrami
operator $\Delta_N$. At least how many closed geodesics with
potential $\Lambda_1(x)$ exist on $N$?}
\end{quote}

\section{Local well-posedness}


In this section, we will use the geometric energy method in
\cite{DW} to prove the local well-posedness of Lorentzian \sch
solitons into a compact K\"ahler manifolds with a Killing potential
and wave maps with potential. We need to recall an important theorem
proved in \cite{DW}. This is a generalized Gagliardo-Nirenberg
inequality.

Let $\pi:E \to M_1$ be a Riemannian vector bundle over an
$m$-dimensional Riemannian manifold $M_1$ and let $D$ denote the
covariant derivative on $E$ induced by the Riemannian metric. Then
we can define a Sobolev norm via the bundle metric for every section
$s\in \Gamma(E)$ by
\[ \norm{s}_{H^{k,q}} = \sum_{l=0}^k \norm{D^l s}_{L^q}. \]

\begin{thm}\label{thm:int} (\cite{DW})
Suppose $s \in C^\infty(E)$ is a section where $E$ is a vector
bundle on $M_1$. Then we have
\begin{equation}
\Norm{D^j s}_{L^p} \leq C\Norm{s}^a_{H^{k,q}}\Norm{s}^{1-a}_{L^r},
\end{equation}
where $1\leq p,q,r\leq \infty$, and $j/k \leq a \leq 1 (j/k \leq a <
1$ if $q=m/(k-j) \neq 1)$ are numbers such that
\[ \frac 1 p = \frac j m + \frac 1 r + a(\frac 1 q - \frac 1 r - \frac k m). \]
The constant $C$ only depends on $M_1$ and the numbers $j,k,q,r,a$.
\end{thm}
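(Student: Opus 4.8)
The plan is to recognize Theorem~\ref{thm:int} as Nirenberg's classical interpolation inequality on $\Real^m$ transplanted to sections of a Riemannian vector bundle over the compact manifold $M_1$, and to deduce it from the Euclidean case by localization. First I would fix a finite atlas of geodesic coordinate charts $(U_\alpha,\varphi_\alpha)$ over which $E$ is trivialized by a local frame, together with a subordinate partition of unity $\{\phi_\alpha\}$ with $0\le\phi_\alpha\le1$ and $\sum_\alpha\phi_\alpha\equiv1$. In such a chart the covariant derivative reads $D=\partial+\Gamma$, and, $M_1$ being compact, the connection coefficients $\Gamma$, the components of $g_1$, the volume density, and all their derivatives up to order $k$ are bounded by constants depending only on the geometry of $(M_1,g_1)$, while $dV_{g_1}$ is comparable to Lebesgue measure on $\varphi_\alpha(U_\alpha)$. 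Thus, writing $s_\alpha:=(\phi_\alpha s)\circ\varphi_\alpha^{-1}$ for the resulting compactly supported vector-valued function on $\varphi_\alpha(U_\alpha)\subset\Real^m$, Leibniz's rule and the boundedness of $\phi_\alpha$, $\Gamma$ and their derivatives give $\sum_{l\le k}\norm{\partial^l s_\alpha}_{L^q}\le C\norm{s}_{H^{k,q}}$ and $\norm{s_\alpha}_{L^r}\le C\norm{s}_{L^r}$, and conversely $\norm{\phi_\alpha D^j s}_{L^p(U_\alpha)}$ is bounded by $\norm{\partial^j s_\alpha}_{L^p}$ plus lower-order terms $\sum_{l<j}\norm{D^l s}_{L^p(U_\alpha)}$ (the only contribution involving exactly $j$ derivatives of $s$ is the term $\phi_\alpha D^j s$ itself).

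The second step is the Euclidean core. Since each $s_\alpha$ is compactly supported, the classical Nirenberg interpolation inequality on $\Real^m$ applies and gives
\[ \norm{\partial^j s_\alpha}_{L^p}\le C\Bigl(\sum_{l\le k}\norm{\partial^l s_\alpha}_{L^q}\Bigr)^{a}\norm{s_\alpha}_{L^r}^{1-a} \]
for precisely the exponents $(p,q,r,j,k,a)$ in the statement, the borderline case $q=m/(k-j)\ne1$ being what forces $a<1$. I would recall the structure of Nirenberg's proof: one first proves a one-variable estimate such as $\norm{f'}_{L^p(\Real)}\le C\norm{f}_{L^r}^{1-a}\norm{f''}_{L^q}^{a}$ by integration by parts, then upgrades it to $\Real^m$ by applying it in each of the $m$ coordinate directions and recombining the resulting inequalities by H\"older's inequality --- the step that produces the identity $\tfrac1p=\tfrac jm+\tfrac1r+a(\tfrac1q-\tfrac1r-\tfrac km)$ --- and finally inducts on $k$ and on $j$, the endpoint exponents $\infty$ and the excluded borderline case being handled by the familiar separate arguments. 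Combining this with the two bounds from Step~1 yields, for each $\alpha$,
\[ \norm{\phi_\alpha D^j s}_{L^p}\le C\,\norm{s}_{H^{k,q}}^{a}\norm{s}_{L^r}^{1-a}+C\sum_{l<j}\norm{D^l s}_{L^p}. \]

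Summing over the finitely many charts and using $\sum_\alpha\phi_\alpha\equiv1$ with the triangle inequality gives
\[ \norm{D^j s}_{L^p}\le C\,\norm{s}_{H^{k,q}}^{a}\norm{s}_{L^r}^{1-a}+C\sum_{l<j}\norm{D^l s}_{L^p}, \]
and the one remaining task --- which I expect to be the main obstacle, since it is exactly where the failure of scale invariance of the full Sobolev norm on a compact manifold has to be confronted --- is to absorb the lower-order remainder $\sum_{l<j}\norm{D^l s}_{L^p}$. One way is to argue by downward induction on $j$, interpolating each $\norm{D^l s}_{L^p}$ with $l<j$ between $\norm{D^j s}_{L^p}$ and a low-order norm by means of the cases of the theorem already proved, and then absorbing the result into the left-hand side. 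An alternative is to carry out the localization over a cover by geodesic balls of a small radius $\rho$, so that $\abs{\partial^i\phi_\alpha}\lesssim\rho^{-i}$; after rescaling each ball to unit size and invoking the scale-invariant form of the Euclidean inequality, the remainder is seen to carry a positive power of $\rho$, and choosing $\rho$ small removes it. In either approach the final constant $C$ depends only on $(M_1,g_1)$ and on $j,k,q,r,a$, as claimed, and the purely Euclidean ingredients are classical.
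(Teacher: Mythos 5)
The paper does not contain a proof of this statement: Theorem~\ref{thm:int} is quoted verbatim from Ding--Wang~\cite{DW} with no argument supplied, so there is no ``paper's own proof'' to compare against. I will therefore assess your proposal on its merits.

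Your localization strategy contains one genuine gap, and it is precisely at the step you describe as routine. You assert that in a trivializing chart ``the connection coefficients $\Gamma$ \dots are bounded by constants depending only on the geometry of $(M_1,g_1)$.'' That would be true if $E$ were $TM_1$ with its Levi--Civita connection, but here $E$ is an \emph{arbitrary} Riemannian vector bundle with an arbitrary compatible connection. Its Christoffel symbols in a local frame are controlled by the geometry of $E$ and its connection, not by $M_1$. The theorem, however, explicitly claims a constant depending \emph{only} on $M_1$ and the numbers $j,k,q,r,a$ --- not on $E$. This independence is not a cosmetic point: in Section~3 the theorem is applied with $E=u(t)^*TN$ and $s=\tilde Du$, where the connection coefficients involve $\nabla u$ and the Christoffel symbols of $N$. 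If the constant were allowed to depend on $E$ (hence on $u$), the resulting a priori estimates would be circular. (Compare Lemma~\ref{lem:equ}, where the passage between $\norm{Du}_{H^{k-1,2}}$ and $\norm{\nabla u}_{W^{k-1,2}}$ genuinely produces polynomial, $u$-dependent factors --- exactly the phenomenon your chart-by-chart reduction would reintroduce, and exactly what a correct proof of Theorem~\ref{thm:int} must avoid.)

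The way Ding--Wang obtain bundle-independence is to \emph{not} trivialize. One reproduces Nirenberg's argument intrinsically at the level of sections: the one-parameter building block (your $\norm{f'}_{L^p}\le C\norm{f}^{1-a}_{L^r}\norm{f''}^a_{L^q}$) is proved by integrating by parts directly with the covariant derivative, e.g.
\[
\int_{M_1}|Ds|^{2p}\,dV_{g_1}
= -\int_{M_1} g_1^{ij}\bigl\langle D_iD_j s,\,s\bigr\rangle\,|Ds|^{2p-2}\,dV_{g_1}
  \;-\;\int_{M_1} g_1^{ij}\bigl\langle D_i s,\,s\bigr\rangle\,\nabla_j\bigl(|Ds|^{2p-2}\bigr)\,dV_{g_1},
\]
followed by Cauchy--Schwarz pointwise. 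Because one never commutes covariant derivatives and never passes to a frame, no curvature of $E$ and no connection coefficients of $E$ ever enter; the only geometric inputs are the metric and volume form of $M_1$. The recombination over coordinate directions and the inductions on $j$ and $k$ then go through word for word as in the Euclidean case, yielding a constant $C=C(M_1,j,k,q,r,a)$ as claimed.

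Your secondary concern --- absorbing the lower-order remainder $\sum_{l<j}\norm{D^ls}_{L^p}$ --- is indeed an issue any proof must handle, and both of your suggested remedies (downward induction and small-ball rescaling) are standard and sound once the core estimate is in place. But the core bundle-independence is the real content of the theorem, and it is lost the moment you write $D=\partial+\Gamma$ and bound $\Gamma$ by the geometry of $M_1$ alone.
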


For Lorenzian manifold $M = \Real\times M_1$ with metric $g = dt^2 -
g_1$ and the compact manifold $N$ which is embedded into $\Real^K$,
let $D$ denote the covariant derivative on the pull-back tangent
bundle $u^*(TN)$ over $M_1$ of $u \in C^\infty (M_1,N)$ and $\tilde
D = D_t + D$ denote the covariant derivative on the bundle over $M$.
Recall we also use $\nabla$ and $\tilde \nabla$ to denote the
covariant derivative of functions on $M_1$ and $M$ respectively. For
convenience we denote $Du = \nabla u$ and $\tilde Du = \tilde \nabla
u$. Obviously, $D^2 u = (\nabla^2 u)^\top$ is the tangent part of
$\nabla^2 u$.

Then by the theorem, for $Du \in \Gamma(u^*(TN))$, we have
\begin{equation}\label{equ:int}
\Norm{D^{j+1} u}_{L^p} \leq C\Norm{D u}^a_{H^{k,q}}\Norm{D u}^{1-a}_{L^r}.
\end{equation}
Ding and Wang also showed that the $H^{k,p}$ norm of section $Du$ is equivalent to the normal Sobolev $W^{k+1, p}$ norm of the
map $u$. Precisely, we have

\begin{lem}\label{lem:equ} (\cite{DW})
Assume that $k > m/2$. Then there exists a constant $C = C(N,k)$
such that for all $u \in C^\infty(M_1,N)$,
\[ \Norm{\nabla u}_{W^{k-1,2}} \leq C\sum_{i=1}^k \Norm{Du}^i_{H^{k-1,2}} \]
and
\[ \Norm{Du}_{H^{k-1,2}} \leq C\sum_{i=1}^k \Norm{\nabla u}^i_{W^{k-1,2}} \]
\end{lem}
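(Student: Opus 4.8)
\textbf{Proof proposal for Lemma~\ref{lem:equ}.}

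The plan is to prove the two inequalities by induction on $k$, exploiting the fact that the difference between the intrinsic covariant derivative $D$ on $u^*(TN)$ and the ambient flat derivative $\nabla$ (acting on the $\Real^K$-valued map $u$) is controlled by the second fundamental form $A(u)$ of $N\hookrightarrow\Real^K$ and its derivatives, all of which are bounded since $N$ is compact. Concretely, for any section $s\in\Gamma(u^*TN)$ one has $\nabla s = Ds + A(u)(\nabla u, s)$ (splitting into tangential and normal parts), so that schematically $\nabla^j(\nabla u)$ and $D^j(\nabla u)$ differ by a universal polynomial expression in $A(u)$ and its covariant derivatives up to order $j-1$, contracted with factors $\nabla u, D\nabla u,\dots, D^{j-1}\nabla u$ (equivalently $\nabla u,\dots,\nabla^j u$). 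The first step is to write down this commutation/comparison formula cleanly: by induction, $\nabla^{j}u - $ (the purely tangential iterate) $= \sum (\text{bounded tensor})\ast \nabla^{i_1}u\ast\cdots\ast\nabla^{i_\ell}u$ where $i_1+\cdots+i_\ell = j$ and each $i_r\ge 1$; symmetrically in the other direction.

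Next I would run the induction on $k$. The base case $k=1$ is essentially the identity $|Du| = |\nabla u|$ (the tangential projection of $\nabla u$ is $\nabla u$ itself). For the inductive step, to bound $\|\nabla u\|_{W^{k-1,2}}$ by powers of $\|Du\|_{H^{k-1,2}}$, one estimates each term $\nabla^j u$ for $j\le k$ using the comparison formula: each summand is a product of lower-order covariant derivatives $D^{i_r}(\nabla u)$ (indices summing to $\le j-1$) times a bounded tensor. One then controls such products in $L^2$ by Hölder's inequality together with the Gagliardo–Nirenberg interpolation inequality of Theorem~\ref{thm:int} applied to the section $Du$: each factor $D^{i_r}(\nabla u)$ is placed in an appropriate $L^{p_r}$ with $\sum 1/p_r = 1/2$, and Theorem~\ref{thm:int} bounds $\|D^{i_r}(\nabla u)\|_{L^{p_r}}$ by $\|Du\|_{H^{k-1,2}}^{a_r}\|Du\|_{L^{\infty}}^{1-a_r}$ or, after absorbing, simply by $\|Du\|_{H^{k-1,2}}$; crucially the exponents $a_r$ can be chosen with $\sum a_r$ a fixed number between $1$ and $k$, which is exactly why the right-hand side is a sum $\sum_{i=1}^k\|Du\|_{H^{k-1,2}}^i$ rather than a single power. (Here the hypothesis $k>m/2$ enters: it guarantees $H^{k-1,2}\hookrightarrow L^\infty$, so $\|Du\|_{L^\infty}$ and all the intermediate $L^{p_r}$ norms are finite and dominated, and the interpolation exponents are admissible.) The reverse inequality, bounding $\|Du\|_{H^{k-1,2}}$ by powers of $\|\nabla u\|_{W^{k-1,2}}$, is entirely parallel using the comparison formula in the other direction.

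The main obstacle, and the place requiring genuine care rather than bookkeeping, is the combinatorial/analytic verification that in every product term arising from the comparison formula the interpolation exponents $(p_r,a_r)$ can simultaneously be chosen to satisfy both the Hölder constraint $\sum_r 1/p_r = 1/2$ and the scaling constraint of Theorem~\ref{thm:int} for each factor, with all $a_r\in[i_r/(k-1),1]$ and $\sum_r a_r\le k$ — i.e.\ that the multilinear estimate closes at the claimed level. This is a standard but slightly delicate Gagliardo–Nirenberg product-estimate argument (the "Moser-type" estimate for compositions and products on manifolds); since the paper attributes Lemma~\ref{lem:equ} to \cite{DW}, I would present the comparison formula and the structure of the induction, then note that the exponent count is the routine Gagliardo–Nirenberg bookkeeping already carried out there, rather than reproduce it in full. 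Everything else — boundedness of $A(u)$ and its derivatives, finiteness of norms, the base case — is immediate from compactness of $N$ and $M_1$.
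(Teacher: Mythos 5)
The paper does not prove this lemma; it is quoted from Ding and Wang~\cite{DW} with no in-text argument, so there is no proof here to compare against directly. Your strategy --- comparing $\nabla^j u$ with $D^j u$ via the second fundamental form $A(u)$ and its covariant derivatives, inducting on the order, and closing the resulting product estimates with the Gagliardo--Nirenberg inequality of Theorem~\ref{thm:int} applied to the section $Du$ --- is exactly the method of~\cite{DW}, and the overall structure of your proposal is sound.

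One claim, however, needs correction. You assert that the hypothesis $k > m/2$ guarantees $H^{k-1,2}\hookrightarrow L^\infty$ on the $m$-manifold $M_1$. This is false: for example $m=2$, $k=2$ gives $H^{1,2}$ on a surface, which does not embed into $L^\infty$; in general one would need $k-1>m/2$, i.e.\ $k>m/2+1$, for that embedding. The hypothesis $k>m/2$ enters differently. First, it gives $W^{k,2}(M_1,\Real^K)\hookrightarrow C^0$, which is what makes $W^{k,2}(M_1,N)$ a well-defined space of maps into the compact target and allows approximation by smooth maps. Second, in the multilinear estimate for each term $\nabla^{i_1}u\ast\cdots\ast\nabla^{i_\ell}u$ (with $\sum i_r=j\le k$, $\ell\ge 2$), the interpolation in Theorem~\ref{thm:int} should be carried out between $\Norm{Du}_{H^{k-1,2}}$ and $\Norm{Du}_{L^2}$ --- not $L^\infty$ --- choosing $p_r=2j/i_r$ so that $\sum 1/p_r=1/2$; one then verifies directly from the scaling relation in Theorem~\ref{thm:int} that the resulting exponents $a_r$ satisfy $(i_r-1)/(k-1)\le a_r\le 1$ precisely because $k\ge m/2$, with the strict inequality accommodating the borderline restriction $a<1$ in Theorem~\ref{thm:int}. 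So the bookkeeping does not, and cannot, rely on any $L^\infty$ control of $Du$, which is unavailable at the claimed regularity. This is a local repair of the justification; the comparison formula, the induction, and the final form $\sum_{i=1}^{k}\Norm{\cdot}^{i}$ all stand.
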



Now we return to the equation (\ref{equ:1}), using the covariant derivative $D$, we can rewrite the equation:
\begin{equation}\label{e1}
\tau(u) = \text{trace}_g (\tilde{D}^2u) = D^2_t u - \sum_{\alpha = 1}^m D_\alpha D_\alpha u = -J(u)V(u).
\end{equation}

To prove the existence of the above equation, usually one needs to
choose a suitable approximate equation for which the existence is
easy to prove, and some uniform a priori estimates of solutions with
respect to the parameter $\epsilon$ needs to be established. Here we
follow \cite{Z} due to Y. Zhou and use the viscous approximation
\begin{equation}\label{equ:3}
D^2_t u - D_\alpha D_\alpha u - \epsilon D_\alpha D_\alpha u_t =
-J(u)V(u),
\end{equation}
where $\epsilon>0$ is a small parameter. Or equivalently,
\begin{equation}\label{equ:4}
u_{tt} - \epsilon\Delta u_t -  \Delta u + J(u)V(u) = A(u)(\tilde
\nabla u,\tilde \nabla u) - \epsilon T(u)(\Delta u_t) \bot T_uN,
\end{equation}
where $T(u)$ denotes the orthogonal projection to the normal bundle
at $u$, i.e.
$$ T(u)(\Delta u_t) = \Delta u_t - (\Delta u_t)^\top. $$
We already know that
\begin{align*}
(\Delta u_t)^\top &= \text{trace}_{g_1}D^2 u_t\\
&=\text{trace}_{g_1}D(\nabla u_t - A(u)(u_t))\\
&= \Delta u_t - A(u)(\nabla u_t, \nabla u) - \text{div} (A(u)(u_t,
\nabla u)).
\end{align*}
Thus we have
\begin{equation}\label{a}
T(u)(\Delta u_t) = A(u)(\nabla u_t, \nabla u) + \text{div}
(A(u)(u_t, \nabla u)).
\end{equation}

This equation (\ref{equ:4}) may be viewed as a parabolic system for
$u_t$. Indeed, the local existence and uniqueness of smooth
solutions to (\ref{equ:4}) for initial data $(u_0,u_1) \in C^\infty
(M_1,TN)$ such that
\begin{equation}\label{equ:ini'}
(u, u_t)(\cdot,0) = (u_0,u_1)
\end{equation}
can be derived by a fixed point argument using the heat kernel of
$M_1$ (see the appendix). Actually, M\"uller and Struwe \cite{MS}
used this approximation method to prove the global existence of weak
solutions to the wave map equation
 in $1+2$ dimensions with finite energy data.

We can define the energy density for a map $u:M\to N$ and $\forall
t\in \Real$ by
\[ e(t) := \frac 1 2 \abs{\ti \nabla u(t)}^2, \]
where
\[ |\ti \nabla u(t)|^2 = \abs{u_t(t)}^2 + \abs{\nabla u(t)}^2. \]
Notice that the norm here is different from the norm induced by the
Lorentzian metric $g = dt^2 - g_1$. This is a convention in wave map theory which we
will adopt through out this paper.

Now we define the energy functional for all maps $u\in W^{1,2}(M,N)$
and $\forall t\in \Real$ by
\[ E(t) := \int_{\{t\}\times M_1}e(t) dV_{g_1},\]
For this energy functional, we have the following energy inequality:

\begin{lem}\label{lem:6}
  For any $\ep\in(0,1]$, suppose $u \in C^\infty(M_1\times[0,T_\ep),N)$
  is a local solution to Cauchy problem~(\ref{equ:3}),~(\ref{equ:ini'}).
  Then we have
  \begin{equation*}
    E(t) \le E(0) - \int^t_0\int_{M_1}\lg u_t,J(u)V(u)\rg.
  \end{equation*}
  Particularly, if $JV = \nabla \Ld$ is the gradient field of a Killing potential $\Ld$, we have
  \begin{equation}\label{eq:12}
    E(t) \le E(0) - \int_{M_1}\Ld(u(t)) + \int_{M_1}\Ld(u(0)).
  \end{equation}
\end{lem}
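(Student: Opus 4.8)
The plan is to differentiate the energy $E(t)$ in time and use the approximate equation~(\ref{equ:3}) to control the derivative. First I would compute
\[
\dif{t}E(t) = \int_{M_1}\big(\lg D_t u_t, u_t\rg + \lg D_t D_\alpha u, D_\alpha u\rg\big)\,dV_{g_1},
\]
using that $D_t$ is a metric connection so it commutes with taking inner products. The second term is rewritten by swapping $D_t$ and $D_\alpha$ (the curvature term that arises is $\lg R(u_t,D_\alpha u)u, D_\alpha u\rg$, which vanishes by the antisymmetry of the curvature operator in its last two slots against $D_\alpha u$), and then integrating by parts over the closed manifold $M_1$ to move $D_\alpha$ onto $u_t$:
\[
\int_{M_1}\lg D_t D_\alpha u, D_\alpha u\rg = \int_{M_1}\lg D_\alpha u_t, D_\alpha u\rg = -\int_{M_1}\lg u_t, D_\alpha D_\alpha u\rg.
\]
Combining, $\dif{t}E(t) = \int_{M_1}\lg u_t,\; D_t u_t - D_\alpha D_\alpha u\rg$.

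Next I would substitute the approximate equation~(\ref{equ:3}), which says $D_t u_t - D_\alpha D_\alpha u = \epsilon D_\alpha D_\alpha u_t - J(u)V(u)$, to get
\[
\dif{t}E(t) = \epsilon\int_{M_1}\lg u_t, D_\alpha D_\alpha u_t\rg - \int_{M_1}\lg u_t, J(u)V(u)\rg = -\epsilon\int_{M_1}\abs{D_\alpha u_t}^2 - \int_{M_1}\lg u_t, J(u)V(u)\rg,
\]
where the last step is again integration by parts on $M_1$. Since $\epsilon>0$, the viscous term is nonpositive and can be dropped, so $\dif{t}E(t) \le -\int_{M_1}\lg u_t, J(u)V(u)\rg$. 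Integrating from $0$ to $t$ gives the first inequality of the lemma.

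For the second, special case, one uses $JV = \nabla\Ld$ so that $\lg u_t, J(u)V(u)\rg = \lg u_t, \nabla\Ld(u)\rg = \dif{t}\big(\Ld(u(t,x))\big)$ pointwise by the chain rule. Hence
\[
\int_0^t\int_{M_1}\lg u_t, J(u)V(u)\rg = \int_{M_1}\big(\Ld(u(t)) - \Ld(u(0))\big)\,dV_{g_1},
\]
which yields~(\ref{eq:12}). The main point requiring care is the interchange of $D_t$ and $D_\alpha$ together with the vanishing of the resulting curvature term; everything else is routine integration by parts, legitimate here because $M_1$ is closed and $u$ is smooth on $M_1\times[0,T_\epsilon)$. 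A minor subtlety is that $\ti\nabla u$ in $e(t)$ refers to the tangential gradient along the map, so $D_\alpha u = (\nabla_\alpha u)^\top$ and $u_t = D_t u$ are genuinely sections of $u^*TN$, which is exactly the setting in which the metric-connection manipulations above are valid.
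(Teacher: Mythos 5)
Your proof is correct and follows the same basic strategy as the paper's: differentiate $E(t)$ in time, substitute the approximate equation, integrate by parts, and drop the nonpositive viscous term. The one real difference is that you work intrinsically with the covariant derivative $D$ on $u^*TN$ and the covariant form of the equation (\ref{equ:3}), whereas the paper works extrinsically in $\Real^K$ with the form (\ref{equ:4}). The trade-off is as follows: the extrinsic route requires one to observe that $\lg A(u)(\ti\nabla u,\ti\nabla u),u_t\rg = 0$ and $\lg T(u)(\Delta u_t),u_t\rg = 0$ since those terms are normal to $T_uN$ while $u_t$ is tangential; the intrinsic route you chose builds this orthogonality into the connection from the start and so has fewer terms to dispose of, at the small cost of using the divergence theorem for bundle-valued objects. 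Both yield $\dif{t}E \le -\int_{M_1}\lg u_t,J(u)V(u)\rg$, with a nonnegative dissipation term ($\epsilon\int|\nabla u_t|^2$ in the paper, $\epsilon\int|D u_t|^2$ in yours --- these differ by $\epsilon\int|A(u)(u_t,\nabla u)|^2$, but both are discarded, so the inequality is unaffected).

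One small inaccuracy worth flagging: no curvature term actually arises when you pass from $D_t D_\alpha u$ to $D_\alpha D_t u$. Since $u$ is the map (not a section), this identity is just the torsion-freeness of the pullback connection, $D_t(\partial_\alpha u)=D_\alpha(\partial_t u)$, and the expression $R(u_t,D_\alpha u)u$ you wrote is not well-formed (the curvature operator acts on sections of $u^*TN$, not on $u$ itself). You dismiss the term anyway, so the argument still goes through, but there was nothing to dismiss. Curvature commutators do appear later in the paper when higher covariant derivatives are exchanged (as in (\ref{equ:5})), which may be the source of the confusion.
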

\begin{proof}
  Using the equation~(\ref{equ:4}), we have
  \begin{align*}
    \frac {dE(t)}{dt} &= \int_ {M_1}\lg u_{tt}, u_t\rg + \lg \nabla u, \nabla u_t \rg \\
    &= \int_{M_1}\lg u_{tt}, u_t\rg - \lg \Delta u, u_t \rg\\
    &= \int_{M_1}\lg \ep\Delta u_t - J(u)V(u) + A(u)(\tilde \nabla u,\tilde \nabla u)
    - \epsilon T(u)(\Delta u_t), u_t \rg\\
    &= -\ep \int_{M_1}|\nabla u_t|^2 - \int_{M_1}\lg J(u)V(u),u_t \rg \\
    &\le - \int_{M_1}\lg J(u)V(u),u_t \rg.
  \end{align*}
Integrating this equality from $0$ to $t$, we get the lemma.
\end{proof}

Thus given a smooth initial data, we can get a local solution
$u_\epsilon \in C^\infty(T_\ep \times M_1,N)$ for every $\ep >0$
which satisfies the energy inequality. Next, in order to establish
the local existence of the equation~(\ref{e1}), we need to derive
some uniform a priori estimates for solutions $u_\epsilon$ with
respect to $\ep$. For this, we denote for a fixed time $t\in
[0,T_\ep)$
\[ \Norm{\tilde D u}^2_{L^2(M_1)} = \int_{M_1} \lg \ti Du, \ti Du \rg =
\int_{M_1} \lg D_tu, D_tu \rg + \lg Du, Du \rg. \] Note again this
norm is \emph{not} the one induce by the Lorentzian metric.

In the following we will assume $M_1$ is flat, i.e. the Riemannian
curvature of $M_1$ vanishes identically, to simplify the
computations. For the general case, the additional terms involving
the curvatures of $M_1$ actually do not provide additional
difficulties, since the derivatives of $u$ appearing in these terms
are of lower orders and the curvature of $M_1$ are bounded.

Let $\mathbf{a}$ be a multi-index with length $\abs{\mathbf{a}}=l$,
and $D_\mathbf{a}$ be the multi-derivative of space direction, we
compute
\begin{equation}
\frac 1 2 \dif{t} \Norm{D_\mathbf{a}\tilde D u}^2_{L^2(M_1)} = \int_{M_1}
\langle D_\mathbf{a}\tilde Du, D_tD_\mathbf{a}\tilde Du \rangle.
\end{equation}
Changing order of the covariant differentiation, we have
\begin{equation}\label{equ:5}
D_tD_\mathbf{a}\tilde Du
= D_\mathbf{a}D_t\tilde Du + \sum D_\mathbf{b}R(u)(D_\mathbf{c}u, D_\mathbf{d}D_tu)D_\mathbf{e}\tilde Du,
\end{equation}
where $R$ is the curvature tensor of $N$ and the summation is taken
for all multi-indexes $\mathbf{b},\mathbf{c},\mathbf{d},\mathbf{e}$
with possible zero lengths, except that $\abs{\mathbf c} > 0$ always
holds, such that
\[ (\mathbf{b},\mathbf{c},\mathbf{d},\mathbf{e}) = \sigma(\mathbf{a}) \]
is a permutation of $\mathbf{a}$. If we denote the curvature terms
like the second term on the right hand side of (\ref{equ:5}) by $Q$,
i.e.
\[Q(X,Y) = \sum D_\mathbf{b}R(u)(D_\mathbf{c}u, D_\mathbf{d}X)D_\mathbf{e}Y,\]
then we have
\begin{eqnarray}\label{e4}
\nonumber\frac 1 2 \dif{t} \Norm{D_\mathbf{a}\tilde Du}^2_{L^2(M_1)}
&=& \int_{M_1}
\langle D_\mathbf{a}D_t\tilde Du + Q_1, D_\mathbf{a}\tilde Du \rangle \\
&=& \int_{M_1}\langle D_\mathbf{a}D_t^2u, D_\mathbf{a}D_tu \rangle +
\langle D_\mathbf{a}D_tDu, D_\mathbf{a}Du \rangle + \langle Q_1,
D_\mathbf{a}\tilde Du \rangle,
\end{eqnarray}
where $Q_1 = Q(D_tu, \tilde Du)$.

For the second term in (\ref{e4}), we have
\begin{eqnarray}\label{e5}
\nonumber\int_{M_1} \langle D_\mathbf{a}D_tDu, D_\mathbf{a}Du
\rangle &=&
\int_{M_1}\langle DD_\mathbf{a}D_tu + Q_2, D_\mathbf{a}Du \rangle\\
\nonumber&=& -\int_{M_1} \langle D_\mathbf{a}D_tu, DD_\mathbf{a}Du \rangle + \langle Q_2, D_\mathbf{a}Du \rangle\\
\nonumber&=& -\int_{M_1} \langle D_\mathbf{a}D_tu, D_\mathbf{a}DDu + Q_3 \rangle + \langle Q_2, D_\mathbf{a}Du \rangle\\
&=& -\int_{M_1} \langle D_\mathbf{a}D_tu, D_\mathbf{a}DDu \rangle
-\langle D_\mathbf{a}D_tu, Q_3 \rangle + \langle Q_2, D_\mathbf{a}Du
\rangle
\end{eqnarray}
where $Q_2 = Q(Du,D_tu),Q_3 = Q(Du,Du)$.

To simplify the notations, we will put all the curvature terms $Q_i$
together and use $\tilde Q$ to denote the sum of those terms.

Combining~(\ref{e4}) and~(\ref{e5}) together and using the equation
(\ref{equ:3}), we get
\begin{align*}
\frac 1 2 \dif{t} \Norm{D_\mathbf{a}\tilde Du}^2_{L^2(M_1)} &\le
\int_{M_1}\langle D_\mathbf{a}D_t^2u - D_\mathbf{a}D Du,
D_\mathbf{a}D_tu \rangle
+|\tilde Q||D_\mathbf{a}\tilde Du| \\
&= \int_{M_1}\langle \epsilon D_\mathbf{a}DDu_t -
D_\mathbf{a}J(u)V(u), D_\mathbf{a}D_tu \rangle
+|\tilde Q||D_\mathbf{a}\tilde Du|\\
&= \int_{M_1}\langle \epsilon DDD_\mathbf{a}u_t + \epsilon Q_4 + \epsilon DQ_5 -
J(u)D_\mathbf{a}V(u), D_\mathbf{a}D_tu \rangle
+|\tilde Q||D_\mathbf{a}\tilde Du|\\
&= \int_{M_1}-\epsilon \langle DD_\mathbf{a}D_tu , DD_\mathbf{a}D_tu
\rangle - \langle J(u)D_\mathbf{a}V(u), D_\mathbf{a}D_tu \rangle
+|\tilde Q||D_\mathbf{a}\tilde Du|\\
&\le C\int_{M_1} \abs{D_\mathbf{a} u}\abs{D_\mathbf{a}D_tu}+
\abs{\tilde Q}\abs{D_\mathbf{a}\tilde Du},
\end{align*}
where $Q_4 = Q(Du,Du_t),Q_5 = Q(Du,D_tu)$. Obviously, we have
\begin{equation}\label{equ:curv}
\begin{split}
\abs{\ti Q} &\le |Q_1| + |Q_2| + |Q_3| +\epsilon|Q_4| + \epsilon|DQ_5|\\
&\le C\abs{Q(\tilde Du, \tilde Du)} + \epsilon\abs{Q(Du, Du_t)} + \epsilon\abs{DQ(Du, u_t)}\\
&\le C\sum\abs{D^{j_1}\tilde Du}\cdots\abs{D^{j_b}\tilde Du},
\end{split}
\end{equation}
where the summation is over all indexes $(j_1, \cdots, j_b)$ satisfying
\begin{equation}\label{equ:index}
j_1\ge j_2 \ge \cdots \ge j_b,\quad l\ge j_i \ge 0,\quad j_1 + \cdots + j_b + b \le l+3,\quad b \ge 3.
\end{equation}

Thus, we get
\begin{equation*}
\frac 1 2 \dif{t} \Norm{D_\mathbf{a}\tilde Du}^2_{L^2(M_1)} \le
 C\int_{M_1} \abs{D_\mathbf{a} u}\abs{D_\mathbf{a}D_tu} +
 C\sum\int_{M_1}\abs{D^l\tilde Du}\abs{D^{j_1}\tilde Du}\cdots\abs{D^{j_b}\tilde
 Du}.
\end{equation*}
Hence
\begin{equation}\label{eq:13}
\begin{aligned}
\frac 1 2 \dif{t} \Norm{D^l\tilde Du}^2_{L^2(M_1)} &\le
  C\int_{M_1} \abs{D^l u}\abs{D^lD_tu}+ C\sum\int_{M_1}
  \abs{D^l\tilde Du}\abs{D^{j_1}\tilde Du}\cdots\abs{D^{j_b}\tilde Du}\\
 &= I + II.
\end{aligned}
\end{equation}

For convenience, we denote $s = \tilde Du$. Then we can apply
Theorem~\ref{thm:int} on $s$ which is a section of the bundle
$u(t)^*TN$ on $M_1$ to get
\begin{equation}\label{eq:int}
\Norm{D^j s}_{L^p} \leq C\Norm{s}^a_{H^{k,q}}\Norm{s}^{1-a}_{L^r},
\end{equation}
where $1\leq p, q, r\leq \infty$ and $j/k \leq a \leq 1$ satisfy
\begin{equation}\label{eq:11}
  \frac 1 p = \frac j m + \frac 1 r + a(\frac 1 q - \frac 1 r - \frac k m).
\end{equation}

Let's first estimate the first term $I$ in (\ref{eq:13}). By
H\"older inequality,
\begin{equation}\label{eq:14}
I \le C\norm{D^l u}_{L^2}\norm{D^lD_t u}_{L^2} \le C\norm{D^{l-1}s}_{L^2}\norm{D^ls}_{L^2}.
\end{equation}
Then using the interpolation inequality (\ref{eq:int}), we have
\[ \norm{D^{l-1}s}_{L^2} \le C\Norm{s}^a_{H^{l,2}}\Norm{s}^{1-a}_{L^2}, \]
where $ a = (l-1)/l$ by (\ref{eq:11}). So we get
\begin{equation}\label{eq:15}
I \le C\Norm{s}^{(l-1)/l}_{H^{l,2}}\Norm{s}^{1/l}_{L^2}\norm{D^ls}_{L^2}.
\end{equation}

Next we treat the second term in~(\ref{eq:13}), i.e.
\[II = \int_{M_1}\abs{D^ls}\abs{D^{j_1}s}\cdots\abs{D^{j_b}s},\]
where the indices satisfy (\ref{equ:index}). Here we directly apply Ding-Wang's lemma in \cite{DW}.
Let $m_0 = [\frac m 2]+ 1$, where $[\frac m 2]$ is the integer part of $\frac m 2$.

\begin{lem}\label{lem:4}(\cite{DW})
If $1\le l \le m_0$, there exists a constant $C = C(M_1,l)$ such
that
\[ II \le C\norm{s}^A_{H^{m_0,2}}\norm{s}^B_{L^2}\norm{D^l s}_{L^2}, \]
where $A = [l +3 + (m/2 - 1)b - m/2]/m_0$ and $B = b - A$.
\end{lem}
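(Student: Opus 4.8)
The plan is to estimate the multilinear integral
\[ II = \int_{M_1}\abs{D^l s}\abs{D^{j_1}s}\cdots\abs{D^{j_b}s} \]
by an iterated H\"older inequality followed by the Gagliardo--Nirenberg interpolation of Theorem~\ref{thm:int} applied to each factor individually. First I would separate off the factor $\abs{D^l s}$ and assign it the exponent $2$, so that the remaining $b$ factors $\abs{D^{j_1}s},\dots,\abs{D^{j_b}s}$ must be controlled in $L^{p_1},\dots,L^{p_b}$ with $\sum_{i=1}^b 1/p_i = 1/2$. By H\"older this reduces matters to proving
\[ \prod_{i=1}^b \Norm{D^{j_i}s}_{L^{p_i}} \le C\Norm{s}^A_{H^{m_0,2}}\Norm{s}^B_{L^2} \]
for a suitable admissible choice of the $p_i$.

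Next I would apply the interpolation inequality~(\ref{eq:int}) to each factor: for each $i$ choose $a_i$ with $j_i/m_0 \le a_i \le 1$ and
\[ \frac{1}{p_i} = \frac{j_i}{m} + \frac{1}{2} + a_i\Bigl(\frac{1}{2} - \frac{1}{2} - \frac{m_0}{m}\Bigr) = \frac{j_i}{m} + \frac{1}{2} - a_i\frac{m_0}{m}, \]
taking $k=m_0$, $q=r=2$ in Theorem~\ref{thm:int}. This gives $\Norm{D^{j_i}s}_{L^{p_i}} \le C\Norm{s}^{a_i}_{H^{m_0,2}}\Norm{s}^{1-a_i}_{L^2}$, hence $A = \sum a_i$ and $B = \sum(1-a_i) = b - A$ after multiplying. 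Imposing the constraint $\sum_i 1/p_i = 1/2$ forces $\sum_i(j_i/m - a_i m_0/m) + b/2 = 1/2$, i.e.
\[ A = \sum_i a_i = \frac{1}{m_0}\Bigl(\sum_i j_i + \frac{m}{2}(b-1)\Bigr), \]
and since $\sum j_i \le l + 3 - b$ by~(\ref{equ:index}), an honest bookkeeping step would replace $\sum_i j_i$ by its extremal value to land on $A = [l+3 + (m/2-1)b - m/2]/m_0$ as claimed (using that the interpolation inequality is monotone in a direction that lets us push $\sum j_i$ to the top of its allowed range, or by handling the difference as an extra $L^2$ factor absorbed into $B$).

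The main obstacle I expect is verifying \emph{admissibility} of this choice of exponents: one must check that each $a_i$ constructed above indeed lies in the permitted range $j_i/m_0 \le a_i \le 1$ (with the strict inequality $a_i < 1$ when $q = m/(m_0 - j_i) = 2$, i.e. when $j_i = m_0 - m/2$, a borderline case that can occur), that each $p_i \in [1,\infty]$, and that $\sum 1/p_i = 1/2$ is genuinely achievable under the constraints~(\ref{equ:index}) — in particular the hypotheses $b\ge 3$, $l \le m_0$, and $j_1 \ge \cdots \ge j_b \ge 0$ are exactly what make the system solvable. The delicate point is that the $a_i$ are not uniquely determined by the single scaling relation~(\ref{eq:11}) together with $\sum 1/p_i = 1/2$, so there is freedom to be exploited: one distributes the ``total derivative budget'' $\sum j_i$ among the factors while keeping each $a_i$ admissible, and the condition $l\le m_0$ guarantees that $A \le $ something $\le$ the total order so that no factor needs more regularity than $H^{m_0,2}$ provides. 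Since this is precisely Ding--Wang's Lemma from~\cite{DW}, I would either cite it directly or reproduce their exponent count, taking care that the flat-$M_1$ reduction made earlier does not interfere (the interpolation theorem already holds on general $M_1$). Once the inequality is in hand, combining it with~(\ref{eq:15}) and a Young's-inequality absorption of the top-order factor $\Norm{D^l s}_{L^2}$ yields the desired differential inequality for $\frac{d}{dt}\Norm{D^l\tilde D u}^2_{L^2}$, closing the a priori estimate.
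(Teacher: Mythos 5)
The paper does not actually prove this lemma; it is quoted verbatim from Ding--Wang~\cite{DW}, so there is no in-paper proof to compare against. Your reconstruction is the standard (and surely the intended) route: H\"older to peel off $\norm{D^l s}_{L^2}$, then a factor-by-factor application of Theorem~\ref{thm:int} with $k=m_0$, $q=r=2$, and the scaling constraint $\sum_i 1/p_i = 1/2$, which yields $A=\sum_i a_i = \bigl(\sum_i j_i + \tfrac{m}{2}(b-1)\bigr)/m_0$; substituting the extremal value $\sum_i j_i = l+3-b$ recovers the stated exponent. Two precisions worth recording. First, when $\sum_i j_i < l+3-b$, which~(\ref{equ:index}) permits, the way to ``push $\sum j_i$ to the top of its range'' is simply the elementary inequality $\norm{s}_{L^2}\le\norm{s}_{H^{m_0,2}}$: it converts excess $L^2$-weight into $H^{m_0,2}$-weight while preserving $A+B=b$, giving the claimed bound with inequality in the right direction. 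That is cleaner and more honest than the ``monotone direction'' gloss. Second, the admissibility checks you flag ($j_i/m_0\le a_i\le 1$, each $p_i\in[1,\infty]$, and the strict borderline $a_i<1$ in the case $m$ even, $j_i=1$) really do require exploiting the freedom to redistribute the $a_i$ subject only to $\sum_i a_i = A$; these are precisely the details Ding--Wang supply, and since the lemma is cited rather than reproved here, deferring to~\cite{DW} for that verification is exactly what the paper does and what you should do as well.
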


\begin{lem}\label{lem:5}(\cite{DW})
If $l > m_0$, there exists a constant $C = C(M_1,l)$ such that\\
(i)if $j_1 = l$,
\[ II \le C\norm{s}^{m/m_0}_{H^{m_0,2}}\norm{s}^{2-m/m_0}_{L^2}\norm{D^l s}^2_{L^2},\]
(ii) if $j_1 \le l$,
\[ II \le C(1+\norm{s}^2_{H^{l,2}})(1+\norm{s}^A_{H^{l-1,2}}),\]
where $A = A(m,l)$.
\end{lem}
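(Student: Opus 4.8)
\emph{Proof proposal.} The plan is to treat both cases by the same two steps: first distribute the $b+1$ factors appearing in $II$ over suitable Lebesgue exponents using H\"older's inequality, and then interpolate each individual factor with the Gagliardo--Nirenberg inequality of Theorem~\ref{thm:int}, reading off the admissible parameters from the scaling identity~(\ref{eq:11}). All the real content sits in the bookkeeping of the indices in~(\ref{equ:index}).

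For case (i) I would begin by noting that $j_1=l$, together with $j_1+\cdots+j_b+b\le l+3$, $b\ge 3$ and $j_i\ge 0$, forces $b=3$ and $j_2=j_3=0$, so that $II=\int_{M_1}\abs{D^l s}^2\abs{s}$. H\"older with exponents $(2,2,\infty)$ then gives $II\le\Norm{D^l s}^2_{L^2}\Norm{s}_{L^\infty}$, and since $m_0=[\frac{m}{2}]+1>\frac{m}{2}$, applying Theorem~\ref{thm:int} to $s$ with $j=0$, $k=m_0$, $p=\infty$, $q=r=2$ (so that~(\ref{eq:11}) forces $a=m/(2m_0)<1$) bounds $\Norm{s}_{L^\infty}$ by $C\Norm{s}^{m/(2m_0)}_{H^{m_0,2}}\Norm{s}^{1-m/(2m_0)}_{L^2}$, which yields the stated inequality.

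For case (ii), where $j_1\le l-1$, the key observation is that $D^l s$ is the unique factor carrying $l$ derivatives. I would peel it off by H\"older and estimate the remaining product by the generalized H\"older inequality: $II\le\Norm{D^l s}_{L^2}\Norm{\prod_{i=1}^b\abs{D^{j_i}s}}_{L^2}\le\Norm{s}_{H^{l,2}}\prod_{i=1}^b\Norm{D^{j_i}s}_{L^{p_i}}$ for any tuple $(p_i)$ with $\sum_i 1/p_i=1/2$. Because every $j_i\le l-1$, Theorem~\ref{thm:int} applies to each factor with $k=l-1$, $q=r=2$, giving $\Norm{D^{j_i}s}_{L^{p_i}}\le C\Norm{s}^{a_i}_{H^{l-1,2}}\Norm{s}^{1-a_i}_{L^2}$; multiplying these, absorbing $\Norm{s}_{L^2}\le\Norm{s}_{H^{l-1,2}}$, writing $A:=\sum_i a_i$, and applying $xy\le x^2+y^2$ to the two surviving factors produces $II\le C(1+\Norm{s}^2_{H^{l,2}})(1+\Norm{s}^A_{H^{l-1,2}})$.

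The hard part, and essentially the only nontrivial step, is to verify that admissible exponents $(p_i)$ and $(a_i)$ actually exist in case (ii). Summing the $b$ copies of~(\ref{eq:11}) against the H\"older constraint $\sum_i 1/p_i=1/2$ pins down $A=\sum_i a_i=\bigl(\sum_i j_i+\frac{m}{2}(b-1)\bigr)/(l-1)$, which by $\sum_i j_i+b\le l+3$ is bounded by a quantity depending only on $m$ and $l$ --- giving the claimed $A=A(m,l)$ (there is still freedom in how $A$ is split among the individual $a_i$). One then checks: (a) the required lower bounds $a_i\ge j_i/(l-1)$ from Theorem~\ref{thm:int} are compatible, which on summation is just $0\le\frac{m}{2}(b-1)$; (b) the $a_i$ can be taken $\le 1$, i.e. $A\le b$, which via $\sum_i j_i\le l+3-b$ reduces to $(b-1)(l-\frac{m}{2})\ge 3$ --- true because $b\ge 3$ and $l>m_0$ force $l-\frac{m}{2}\ge\frac{3}{2}$; and (c) the borderline regime of Theorem~\ref{thm:int} that demands $a_i<1$ strictly arises only for $j_i=l-1$, where $a_i=1$, $p_i=2$ is forced and the corresponding factor collapses to the trivial embedding $\Norm{D^{l-1}s}_{L^2}\le\Norm{s}_{H^{l-1,2}}$. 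I expect this simultaneous balancing of the Gagliardo--Nirenberg scaling, the H\"older budget, and the caps $a_i\le 1$ against the exact allowance $j_1+\cdots+j_b+b\le l+3$, $b\ge 3$ to be precisely where all the hypotheses are used; once it is in place, the two inequalities are exactly what the Gronwall argument for $\dif{t}\Norm{D^l\tilde D u}^2_{L^2(M_1)}$ requires.
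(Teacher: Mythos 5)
The paper does not prove this lemma itself; it is quoted directly from Ding--Wang~\cite{DW}, so there is no internal proof to compare against. Evaluating your attempt on its own merits, the strategy (H\"older plus Theorem~\ref{thm:int} on each factor, using the constraint $j_1+\cdots+j_b+b\le l+3$, $b\ge 3$ to control the exponents) is the natural one and your case (ii) bookkeeping --- the formula $A=\bigl(\sum_i j_i+\tfrac{m}{2}(b-1)\bigr)/(l-1)$ and the check $(b-1)(l-\tfrac{m}{2})\ge 3$ that guarantees $a_i\le 1$ can be arranged --- is essentially correct, modulo the fact that the genuine borderline of Theorem~\ref{thm:int} occurs at $j_i=l-1-\tfrac{m}{2}$ rather than at $j_i=l-1$ (the latter only forces $a_i=1$, $p_i=2$ via the constraint $a\ge j/k$, which as you say is harmless).

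There is, however, a concrete counting error in case (i). From $j_1=l$, $b\ge 3$ and $j_1+\cdots+j_b+b\le l+3$ you correctly deduce $b=3$, $j_2=j_3=0$. But then
\[
  II=\int_{M_1}\abs{D^l s}\,\abs{D^{j_1}s}\,\abs{D^{j_2}s}\,\abs{D^{j_3}s}
    =\int_{M_1}\abs{D^l s}^2\,\abs{s}^2,
\]
not $\int_{M_1}\abs{D^l s}^2\abs{s}$: the two zero-order indices $j_2=j_3=0$ each contribute a factor $\abs{s}$. Consequently H\"older gives $II\le\Norm{D^l s}^2_{L^2}\Norm{s}^2_{L^\infty}$, and squaring your (correct) interpolation $\Norm{s}_{L^\infty}\le C\Norm{s}^{m/(2m_0)}_{H^{m_0,2}}\Norm{s}^{1-m/(2m_0)}_{L^2}$ produces exactly the exponents $m/m_0$ and $2-m/m_0$ of the lemma. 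As written, your chain of inequalities terminates at $C\Norm{s}^{m/(2m_0)}_{H^{m_0,2}}\Norm{s}^{1-m/(2m_0)}_{L^2}\Norm{D^l s}^2_{L^2}$, which has half the advertised exponents and does not match the statement; the claim that this ``yields the stated inequality'' is therefore false in the form given. The fix is a one-line correction, but it should be made.
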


Now we can prove our main lemma. Note that previous computations do not depend on the variational structure. But to get the bound on
energy, we need to assume that $JV = \nabla \Ld$ in the following context.

\begin{lem}\label{lem:6}
  Suppose $(u_0,u_1)\in C^\infty(M_1,TN)$, then there exists
  \[T = T(\norm{\nabla u_0}_{H^{m_0,2}}, \norm{u_1}_{H^{m_0,2}})>0\]
  independent of  $\ep \in (0,1]$, such that if $u_\ep \in C^\infty(M_1\times[0,T_\ep],N)$ is a solution to~(\ref{equ:3}),~(\ref{equ:ini'}), then
  $T_\ep \ge T$, and
\begin{equation}\label{eq:6}
 \norm{\tilde Du}_{H^{k,2}} \le C(\norm{\nabla u_0}_{H^{k,2}}, \norm{u_1}_{H^{k,2}}), \forall t\in[0,T],
 \end{equation}
 for all $k\ge m_0$.
\end{lem}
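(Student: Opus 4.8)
The plan is to run a continuity/bootstrap argument on the quantity
$y(t) := \norm{D^l \tilde D u(t)}^2_{L^2(M_1)}$ summed over $0 \le l \le m_0$, controlled by the master differential inequality~(\ref{eq:13}) together with the interpolation estimates~(\ref{eq:15}) and Lemma~\ref{lem:4}. First I would fix $k = m_0$ and set $y_{m_0}(t) := \sum_{l=0}^{m_0} \norm{D^l \tilde D u(t)}^2_{L^2}$, which by Lemma~\ref{lem:equ} is comparable to $\norm{\nabla u(t)}^2_{W^{m_0-1,2}} + \norm{u_t(t)}^2_{W^{m_0-1,2}}$ (up to the lower-order $\norm{\tilde D u}_{L^2}$, which Lemma~\ref{lem:6} on the energy already bounds: when $JV = \nabla\Ld$, equation~(\ref{eq:12}) gives $E(t) \le E(0) + 2\sup_N|\Ld|$, so $\norm{\tilde D u(t)}_{L^2}$ is bounded a priori independently of $\ep$ and $t$). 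Summing~(\ref{eq:13}) over $l$, applying~(\ref{eq:15}) to term $I$ and Lemma~\ref{lem:4} to term $II$, and absorbing every factor $\norm{D^l s}_{L^2}$ by Young's inequality into the highest-order piece, one obtains a bound of the form
\[
  \tfrac{d}{dt} y_{m_0}(t) \le C\big(1 + y_{m_0}(t)\big)^{p}
\]
for some power $p$ depending only on $m$ — the exponents $A, B$ in Lemma~\ref{lem:4} and the exponent $(l-1)/l$ in~(\ref{eq:15}) being all strictly less than what is needed, after the energy bound is used to convert the $H^{m_0,2}$-norm of $s$ into $(y_{m_0})^{1/2}$ plus a constant. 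Crucially the right-hand side carries no factor of $\ep$: the viscous terms contributed only the manifestly good term $-\ep\norm{DD_\mathbf{a}D_t u}^2_{L^2} \le 0$, which we simply discard. Hence by a standard ODE comparison there is $T = T(y_{m_0}(0)) > 0$, independent of $\ep \in (0,1]$, such that $y_{m_0}(t) \le 2 y_{m_0}(0)$ for $t \in [0, T]$; and since the local solution $u_\ep$ cannot cease to exist while $y_{m_0}$ stays finite (the approximate equation~(\ref{equ:4}) being parabolic in $u_t$, its maximal existence time is characterized by blow-up of a sufficiently high Sobolev norm), this forces $T_\ep \ge T$.

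Next I would propagate the bound to all $k \ge m_0$. For $k > m_0$ I argue inductively on $l$: assume~(\ref{eq:6}) holds for all orders up to $l-1$ on $[0,T]$; in~(\ref{eq:13}) for order $l$, estimate $I$ by~(\ref{eq:14})–(\ref{eq:15}) and estimate $II$ by Lemma~\ref{lem:5}. In case (i) of Lemma~\ref{lem:5} the bound is $C\norm{s}^{m/m_0}_{H^{m_0,2}} \norm{s}^{2-m/m_0}_{L^2}\norm{D^l s}^2_{L^2}$, and since $\norm{s}_{H^{m_0,2}}$ is already controlled on $[0,T]$ by the $k=m_0$ step and $\norm{s}_{L^2}$ by the energy, this reads $\le C(T)\,\norm{D^l s}^2_{L^2} \le C(T) y_k(t)$; in case (ii) the bound $C(1 + \norm{s}^2_{H^{l,2}})(1 + \norm{s}^A_{H^{l-1,2}})$ becomes $\le C(T)(1 + y_k(t))$ by the inductive hypothesis. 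Together with the analogous (linear-in-$y_k$) bound for $I$, we get $\frac{d}{dt} y_k(t) \le C(T)(1 + y_k(t))$ on $[0,T]$, and Gronwall yields~(\ref{eq:6}) with a constant depending only on the initial data and the (already fixed) time $T$. Finally, translating $y_k$ back via Lemma~\ref{lem:equ} gives the stated bound on $\norm{\tilde D u}_{H^{k,2}}$ in terms of $\norm{\nabla u_0}_{H^{k,2}}$ and $\norm{u_1}_{H^{k,2}}$.

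The main obstacle is the bookkeeping at the critical level $l = m_0$: one must check that after invoking the energy bound the effective power $p$ on the right-hand side is finite (no borderline logarithmic loss) and, more delicately, that the constant $C$ in Lemma~\ref{lem:4} and in the interpolation inequality Theorem~\ref{thm:int} genuinely does not depend on $\ep$ — which it does not, since $\ep$ enters~(\ref{equ:3}) only through the sign-definite dissipation — and does not degenerate as the domain is rescaled (the scaling invariance of Theorem~\ref{thm:int} noted in the remark). A secondary technical point is justifying that the formal computation leading to~(\ref{eq:13}) is legitimate for the smooth approximate solution $u_\ep$ up to its maximal time, i.e. that all the integrations by parts and commutator identities~(\ref{equ:5}) hold on $M_1$ (here using compactness of $M_1$, so no boundary terms), and that the $\ep$-viscosity term really does drop out with a favorable sign after the commutations in the displayed chain of equalities preceding~(\ref{equ:curv}). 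Once these are in hand the argument is a routine energy-method bootstrap.
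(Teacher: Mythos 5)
Your proposal follows essentially the same route as the paper's proof: the energy inequality~(\ref{eq:12}) bounds $\norm{s}_{L^2}$ uniformly in $\ep$, then a self-closing differential inequality for the $H^{m_0,2}$ norm of $s = \tilde D u$ is extracted from~(\ref{eq:13}),~(\ref{eq:15}) and Lemma~\ref{lem:4} (the paper writes it for $f(t) = \norm{s}_{H^{m_0,2}} + 1$ rather than its square, but that is a cosmetic difference), after which an induction on $k > m_0$ via Lemma~\ref{lem:5} and Gronwall closes the estimate. The only substantive thing you add is the explicit continuation argument deducing $T_\ep \ge T$ from the uniform bound on $[0,T]$; the paper leaves this step implicit after invoking ODE theory, so your spelling it out is a sensible completion, not a different method.
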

\begin{proof}
  We still denote $s = \ti Du$, then the energy functional in Lemma~\ref{lem:6} is $E(t) = \frac 1 2\norm{s}_{L^2}$.
  Since $\Ld$ is a smooth function on a compact manifold $N$, it's bounded. From the energy inequality ~(\ref{eq:12}), we have
  \begin{equation}
    \norm{s}_{L^2} = 2E(t) \le 2E(0) - 2\int_{M_1}\Ld(u(t)) + 2\int_{M_1}\Ld(u(0)) \le C.
  \end{equation}

Now we turn to (\ref{eq:13}). We first consider the case $1\le l \le
m_0$. According to (\ref{eq:15}) and Lemma \ref{lem:4}, we have
\begin{equation*}
\begin{aligned}
\frac 1 2 \dif{t} \Norm{D^ls}^2_{L^2(M_1)} &\le I + II \\
&\le C\Norm{s}^{(l-1)/l}_{H^{l,2}}\Norm{s}^{1/l}_{L^2}\norm{D^ls}_{L^2}
+ C\sum\norm{s}^A_{H^{m_0,2}}\norm{s}^B_{L^2}\norm{D^l s}_{L^2} \\
&\le C\Norm{s}^{(l-1)/l}_{H^{m_0,2}}\norm{D^ls}_{L^2} + C\sum\norm{s}^A_{H^{m_0,2}}\norm{D^l s}_{L^2}.
\end{aligned}
\end{equation*}
Summing this inequality from $l = 1$ to $l = m_0$, we get
\begin{equation*}
\begin{aligned}
\frac 1 2 \dif{t} \Norm{s}^2_{H^{m_0,2}} &\le
C(\sum_{l}\Norm{s}^{(l-1)/l}_{H^{m_0,2}} + \sum_{b,l}\norm{s}^{A(b,l)}_{H^{m_0,2}})\norm{s}_{H^{m_0,2}}.
\end{aligned}
\end{equation*}
i.e.
\begin{equation}
  \dif{t} \Norm{s}_{H^{m_0,2}} \le
C(\sum_{l}\Norm{s}^{(l-1)/l}_{H^{m_0,2}} + \sum_{b,l}\norm{s}^{A(b,l)}_{H^{m_0,2}}).
\end{equation}
where
\[ A(b,l) = [l +3 + (m/2 - 1)b - m/2]/m_0. \]
If we let
\[ f(t) = \norm{s}_{H^{m_0,2}} + 1, \]
we have
\begin{equation}
  \left\{
  \begin{aligned}
  &\dif{t} f(t) \le Cf(t)^{A_0},\\
  &f(0) = \norm{\nabla u_0}_{H^{m_0,2}} + \norm{u_1}_{H^{m_0,2}} + 1.
  \end{aligned}
  \right.
\end{equation}
where
\[ A_0 = \max_{b,l}\{(l-1)/l, A(b,l)\}, \]
and the constant $C$ only depends on $\norm{\nabla u_0}_{H^{m_0,2}},
\norm{u_1}_{H^{m_0,2}}$ and the manifolds $M_1,N$.

It follows from ordinary differential equation theory that there exists
\[T = T(\norm{\nabla u_0}_{H^{m_0,2}}, \norm{u_1}_{H^{m_0,2}})>0\]
and a constant $K$ such that $f(t) \le K$, i.e.
\begin{equation}\label{eq:16}
  \norm{\ti Du(t)}_{H^{m_0,2}} \le K, \forall t \in [0,T].
\end{equation}

Next we treat the case $k > m_0$. (\ref{eq:13}),~(\ref{eq:14})
together with Lemma~\ref{lem:5} leads to
\[ \dif{t} \norm{D^l s}^2_{L^2} \le C\norm{s}^2_{H^{k,2}} + C\sum(1+\norm{s}^2_{H^{k,2}})(1+\norm{s}^A_{H^{k-1,2}}). \]
Summing up from $l = 1, \cdots, k$, we get
\begin{equation}\label{eq:17}
  \dif{t} \norm{s}^2_{H^{k,2}} \le C\sum(1+\norm{s}^2_{H^{k,2}})(1+\norm{s}^A_{H^{k-1,2}}).
\end{equation}
Then we perform a induction for $k>m_0$. Specifically, we first consider $k = m_0 +1$.
 From~(\ref{eq:16}), (\ref{eq:17}), we get
\[ \dif{t} \norm{s}^2_{H^{m_0+1,2}} \le CK\sum(1+\norm{s}^2_{H^{m_0+1,2}}), \forall t \in [0,T].\]
By Gronwall's inequality, we get
\[ \norm{\ti Du(t)}_{H^{m_0+1,2}} \le C', \forall t \in [0,T]. \]
Then by induction, for any $k = m_0+i, i \ge 1$ it follows from ~(\ref{eq:16}),~(\ref{eq:17}) that
\[ \norm{\ti Du(t)}_{H^{k,2}} \le C_k, \forall t \in [0,T]. \]
where $C_k$ only depends on $\norm{\nabla u_0}_{H^{k,2}},
\norm{u_1}_{H^{k,2}}$.

Thus we proved the lemma.
\end{proof}

Now we can prove the local existence of the solution to Cauchy problem
(\ref{equ:2'}),(\ref{equ:ini}) with smooth initial data.

\begin{thm}\label{thm:smooth}
Suppose $(u_0, u_1)\in C^\infty(M_1,TN)$, then there exists
\[T = T(\norm{\nabla u_0}_{H^{m_0,2}}, \norm{u_1}_{H^{m_0,2}})>0\]
such that  the Cauchy problem
 (\ref{equ:2'}),(\ref{equ:ini}) has a local solution $u\in C^\infty(M_1\times [0,T],N)$.
\end{thm}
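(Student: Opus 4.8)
To prove Theorem~\ref{thm:smooth}, the plan is to realize the solution as the vanishing-viscosity limit of the approximations $u_\ep$ introduced above. For a fixed $\ep\in(0,1]$, equation~(\ref{equ:4}) is, modulo lower-order terms, a quasilinear parabolic system for $u_t$ with strongly parabolic principal part $-\ep\Delta$; hence the fixed-point argument built on Duhamel's formula and the heat kernel of $M_1$ mentioned above (see the appendix) produces a unique $u_\ep\in C^\infty(M_1\times[0,T_\ep),N)$ with $(u_\ep,\p_t u_\ep)(\cdot,0)=(u_0,u_1)$, defined on a maximal interval $[0,T_\ep)$, and by the standard continuation principle, if $T_\ep<\infty$ then $\norm{\ti D u_\ep(t)}_{H^{m_0,2}}\to\infty$ as $t\uparrow T_\ep$. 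The whole point of the viscous scheme is to reach a limit as $\ep\to 0$, so everything hinges on controlling $u_\ep$ uniformly in $\ep$.

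This uniform control is exactly what Lemma~\ref{lem:6} supplies: there is a time $T=T(\norm{\nabla u_0}_{H^{m_0,2}},\norm{u_1}_{H^{m_0,2}})>0$, independent of $\ep$, together with a priori estimates $\norm{\ti D u_\ep(t)}_{H^{k,2}}\le C_k$ for all $t\in[0,T]$ and all $k\ge m_0$, where $C_k$ depends only on $M_1,N$ and $\norm{\nabla u_0}_{H^{k,2}},\norm{u_1}_{H^{k,2}}$. Combined with the continuation principle this forces $T_\ep\ge T$ for every $\ep\in(0,1]$, and via the norm equivalence of Lemma~\ref{lem:equ} the estimates translate into
\[ \sup_{0\le t\le T}\Big(\norm{u_\ep(t)}_{W^{k,2}}+\norm{\p_t u_\ep(t)}_{W^{k-1,2}}\Big)\le C_k',\qquad k\ge m_0+1, \]
with $C_k'$ again $\ep$-independent.

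To upgrade these to compactness in space and time, one reads off $\p_t^2 u_\ep$ from~(\ref{equ:4}) and~(\ref{a}): since $\ep\le 1$, every $\ep$-term is controlled uniformly in $\ep$ by the bounds of Step 2 (with a harmless loss of finitely many derivatives, available since those bounds hold at every level), while $J(u_\ep)V(u_\ep)$ and $A(u_\ep)(\ti\nabla u_\ep,\ti\nabla u_\ep)$ are bounded in $W^{k,2}$ by Moser-type product estimates together with smoothness of $A,J,V$ on the compact $N$. Hence $\{u_\ep\}$ is bounded in $C^2([0,T],W^{k,2})$ for every $k$. By the compact Sobolev embeddings $W^{k,2}(M_1)\hookrightarrow W^{k-1,2}(M_1)$, the Arzel\`a--Ascoli theorem and a diagonal argument over $k$, a subsequence $u_{\ep_i}$ converges, as $\ep_i\to 0$, to a limit $u$ in $C^1([0,T],W^{k,2})$ for every $k$, hence in $C^\infty(M_1\times[0,T])$; the limit is $N$-valued and has initial data $(u_0,u_1)$ since these are closed conditions. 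Passing to the limit in~(\ref{equ:4}), the linear terms converge trivially, $J(u_{\ep_i})V(u_{\ep_i})\to J(u)V(u)$ and $A(u_{\ep_i})(\ti\nabla u_{\ep_i},\ti\nabla u_{\ep_i})\to A(u)(\ti\nabla u,\ti\nabla u)$ by the smooth convergence, and the regularizing terms $\ep_i\Delta\p_t u_{\ep_i}$, $\ep_i T(u_{\ep_i})(\Delta\p_t u_{\ep_i})$ tend to $0$ in every $W^{k,2}$ by Step 2. Thus $\square u=A(u)(\ti\nabla u,\ti\nabla u)-J(u)V(u)=A(u)(\ti\nabla u,\ti\nabla u)-\nabla\Ld(u)$ with $(u,\p_t u)(\cdot,0)=(u_0,u_1)$; a final bootstrap of~(\ref{equ:2'}), whose right-hand side lies in $C^0([0,T],W^{k,2})$ for all $k$, gives the full mixed space-time regularity, so $u\in C^\infty(M_1\times[0,T],N)$.

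The main obstacle — already absorbed into Lemma~\ref{lem:6} — is the $\ep$-uniform control of the \emph{entire} Sobolev tower of $\ti D u_\ep$, not merely at the critical level $H^{m_0,2}$: the viscosity term $\ep\Delta\p_t u_\ep$ carries two extra derivatives, so uniform higher-order bounds are precisely what makes it vanish in the limit, and the same bounds feed the diagonal Arzel\`a--Ascoli argument that produces a smooth limit. Once Lemma~\ref{lem:6} is granted, the remaining steps are routine.
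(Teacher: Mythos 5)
Your proposal follows essentially the same route as the paper: realize the solution as the $\ep\to0$ limit of the viscous approximations $u_\ep$ from~(\ref{equ:4}), invoke Lemma~\ref{lem:6} for an $\ep$-uniform lower bound on the existence time and $\ep$-uniform bounds on $\norm{\ti D u_\ep}_{H^{k,2}}$ for every $k\ge m_0$, convert these to $W^{k,2}$ bounds via Lemma~\ref{lem:equ}, and pass to the limit. The only difference is one of exposition: where the paper simply says "by letting $\ep\to0$ and applying Sobolev embedding theorems" one extracts a smooth limit, you spell out the compactness step, observing that the equation also gives $\ep$-uniform control of $\p_t^2 u_\ep$ (with a harmless derivative loss, acceptable since the Sobolev tower is controlled at every level), then invoke compact Sobolev embedding, Arzel\`a--Ascoli, and a diagonal argument over $k$. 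That elaboration is a genuine improvement in rigor over the paper's terse wording, but it is not a different proof.
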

\begin{proof}
For any $\ep>0$, there is a smooth solution $u_\ep \in C^\infty(M_1\times [0,T_\ep], N)$ to ~(\ref{equ:3}).
Moreover, $u_\ep$ satisfies the estimate (\ref{eq:6}) in Lemma~\ref{lem:6} and there is a constant $T>0$ such that $T_\epsilon \ge T,\forall \ep >0$.
It follows form Lemma ~\ref{lem:equ} that
\begin{equation}\label{eq:7}
  \max_{t\in [0,T]} \norm{\ti D u_\ep}_{W^{k,2}} \le C(\norm{\nabla u_0}_{H^{k,2}}, \norm{u_1}_{H^{k,2}}), \forall k\ge m_0,
\end{equation}
where the constant $C$ is independent of $\ep$. Thus,
by letting $\epsilon \to 0$ and applying Sobolev embedding theorems,
we can find a limit map $u\in C^\infty(M_1\times [0,T], N)$, such
that $u_\ep \to u$ in $C^k(M_1\times [0,T], N)$ for any $k$.
It's easy to verify that $u$ is a smooth solution to equation~(\ref{equ:2}).
\end{proof}

From (\ref{eq:7}), one can easily see that the limit map $u$ also satisfies the same estimate, i.e.
\begin{equation*}
  \max_{t\in [0,T]} \norm{\ti D u}_{W^{k,2}} \le C(\norm{\nabla u_0}_{H^{k,2}}, \norm{u_1}_{H^{k,2}}), \forall k\ge m_0,
\end{equation*}
In fact, we can say more about $u$. Namely, the above
inequality also holds for $k=m_0-1$.

\begin{lem}\label{lem:7}
  Suppose $u$ is a solution to Cauchy problem~(\ref{equ:2'}),(\ref{equ:ini})
  given by Theorem~\ref{thm:smooth}, then
\begin{equation}\label{eq:8}
 \max_{t\in [0,T]} \norm{\tilde Du}_{H^{k,2}} \le C(\norm{\nabla u_0}_{H^{k,2}},
 \norm{u_1}_{H^{k,2}}), \forall k\ge m_0-1.
 \end{equation}
\end{lem}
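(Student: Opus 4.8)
The plan is to prove (\ref{eq:8}) for the one genuinely new value $k=m_0-1$; for $k\ge m_0$ the estimate is just (\ref{eq:6}) passed to the limit map $u$. If $m=1$ then $m_0-1=0$ and (\ref{eq:8}) reduces to $\norm{\tilde Du(t)}^2_{L^2}=2E(t)\le C$, which is immediate from the energy inequality (\ref{eq:12}); so I assume henceforth $m\ge2$, hence $m_0\ge2$.

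The guiding observation is that \emph{once the top order norm $\norm{\tilde Du(t)}_{H^{m_0,2}}$ is already known to stay bounded on $[0,T]$}, the energy identity for the lower order norm $\norm{\tilde Du(t)}_{H^{m_0-1,2}}$ becomes a \emph{linear} differential inequality, so Gronwall --- rather than the ODE comparison used in the proof of Lemma~\ref{lem:6} --- controls it on all of $[0,T]$ in terms of only the $H^{m_0-1,2}$ norm of the data. Concretely, the limit solution $u$ is smooth by Theorem~\ref{thm:smooth}, solves (\ref{e1}), and by (\ref{eq:6}) satisfies
\[ \norm{\tilde Du(t)}_{H^{m_0,2}}\le K:=C\bigl(\norm{\nabla u_0}_{H^{m_0,2}},\norm{u_1}_{H^{m_0,2}}\bigr),\qquad t\in[0,T]. \]
For every $l\ge1$ the differential identity leading to (\ref{eq:13}) holds for $u$ as well --- the viscous term of (\ref{equ:3}) contributed there only the discarded nonpositive quantity $-\ep\norm{DD_\mathbf{a}D_tu}^2_{L^2}$, so it is harmless to set $\ep=0$ --- giving $\tfrac12\dif{t}\norm{D^l\tilde Du}^2_{L^2}\le I+II$. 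I apply this for $1\le l\le m_0-1$. In $I$, the bound (\ref{eq:14}) gives $I\le C\norm{D^{l-1}s}_{L^2}\norm{D^ls}_{L^2}$ with $s=\tilde Du$, and since $l-1\le m_0-2$ the first factor is $\le\norm{s}_{H^{m_0,2}}\le K$. In $II$ --- and in the curvature terms $\tilde Q$, which are of the same product type --- Lemma~\ref{lem:4} is available since $l\le m_0$, and the norms $\norm{s}_{H^{m_0,2}}$ and $\norm{s}_{L^2}$ in its conclusion are bounded by $K$ and, via (\ref{eq:12}), by a fixed constant. Hence for $t\in[0,T]$ and $1\le l\le m_0-1$,
\[ \dif{t}\norm{D^l\tilde Du}^2_{L^2}\le C\norm{D^l\tilde Du}_{L^2}\le C\bigl(1+\norm{D^l\tilde Du}^2_{L^2}\bigr),\qquad C=C(M_1,N,K). \]
Summing over $l=1,\dots,m_0-1$ gives a linear differential inequality $\dif{t}\Psi(t)\le C(1+\Psi(t))$ on $[0,T]$ for $\Psi(t):=\sum_{l=1}^{m_0-1}\norm{D^l\tilde Du(t)}^2_{L^2}$; by Gronwall, $\Psi(t)\le(1+\Psi(0))e^{CT}$. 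Together with $\norm{\tilde Du(t)}^2_{L^2}=2E(t)\le C$ from (\ref{eq:12}) and the fact that $\Psi(0)$ and $\norm{\tilde Du(0)}_{L^2}$ are controlled by $\norm{\nabla u_0}_{H^{m_0-1,2}}$ and $\norm{u_1}_{H^{m_0-1,2}}$, this yields (\ref{eq:8}) for $k=m_0-1$. (Equivalently, one may run the same estimate on the viscous solutions $u_\ep$ uniformly in $\ep$ and then let $\ep\to0$.)

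The only point needing attention is that each nonlinear error term --- those produced by commuting the covariant derivatives, the contribution of the potential $J(u)V(u)$, and the curvature terms of $N$ --- must involve at most $m_0$ covariant derivatives of $u$, so that it can be absorbed into the already-known constant $K$. This is guaranteed precisely by the restriction $l\le m_0-1$ together with the index count (\ref{equ:index}): every companion factor $D^{j_i}s$ occurring in $II$ then has $j_i\le l\le m_0-1$, so it is dominated by $\norm{s}_{H^{m_0,2}}$ through Lemma~\ref{lem:4}. (Consequently the constant in (\ref{eq:8}) for $k=m_0-1$ does also depend on the $H^{m_0,2}$ norms of the data, through $K$ and $T$; this is harmless, since $u$ is the smooth solution furnished by Theorem~\ref{thm:smooth}.) Apart from this bookkeeping the argument merely specializes the computation already carried out in the proof of Lemma~\ref{lem:6}, so no genuinely new difficulty arises.
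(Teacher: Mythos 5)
The computation you carry out is valid, but the conclusion it yields is strictly weaker than the lemma asserts, and the gap is not ``harmless'' --- it is the whole point of Lemma~\ref{lem:7}. Your bound for $k=m_0-1$ depends on $K=C(\norm{\nabla u_0}_{H^{m_0,2}},\norm{u_1}_{H^{m_0,2}})$, i.e.\ on the $H^{m_0,2}$ norms of the data. But once $\norm{\tilde Du(t)}_{H^{m_0,2}}\le K$ is granted, the estimate $\norm{\tilde Du(t)}_{H^{m_0-1,2}}\le\norm{\tilde Du(t)}_{H^{m_0,2}}\le K$ is immediate from the definition of the $H$-norms, so your Gronwall step proves nothing that was not already trivially contained in~(\ref{eq:6}). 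The substance of the lemma is precisely the \emph{improved} dependence: the constant (and the underlying time $T$) at level $m_0-1$ should be expressible through $\norm{\nabla u_0}_{H^{m_0-1,2}}$ and $\norm{u_1}_{H^{m_0-1,2}}$ only.

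This improved dependence is indispensable for the proof of Theorem~\ref{thm:sch}. There one takes rough data $u_0\in W^{m_0,2}(M_1,N)$, $u_1\in W^{m_0-1,2}(M_1,T_{u_0}N)$, approximates by smooth $(u_0^i,u_1^i)$ with $u_0^i\to u_0$ in $W^{m_0,2}$ and $u_1^i\to u_1$ in $W^{m_0-1,2}$, and needs the estimate~(\ref{eq:8}) (at the $H^{m_0-1,2}$ level) to be uniform in~$i$. With your proof, the right-hand side would involve $\norm{\nabla u_0^i}_{H^{m_0,2}}$ and $\norm{u_1^i}_{H^{m_0,2}}$, which in general blow up as $i\to\infty$ when the limit data are only in $W^{m_0,2}\times W^{m_0-1,2}$; the uniformity is lost and the weak-$\ast$ compactness argument collapses. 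The observation that the solution $u$ in the lemma is smooth does not rescue this, because the lemma is applied to the approximating smooth solutions whose \emph{higher-order} data norms are out of control.

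The actual mechanism is the one you mention only in passing and then set aside: for the limit map $u$ (where $\ep=0$) the terms $Q_4=Q(Du,Du_t)$ and $DQ_5$ in~(\ref{equ:curv}) are absent, so the index constraint~(\ref{equ:index}) tightens from $j_1+\cdots+j_b+b\le l+3$ to $j_1+\cdots+j_b+b\le l+2$. That one-unit gain is exactly what allows the full ODE-comparison argument of Lemma~\ref{lem:6} --- the analogue of Lemma~\ref{lem:4}, the differential inequality for $f(t)=\norm{s}_{H^{m_0-1,2}}+1$, the resulting $T$, and then the induction for higher $k$ --- to be rerun verbatim with $m_0$ replaced by $m_0-1$, producing both a time and a bound controlled solely by $\norm{\nabla u_0}_{H^{m_0-1,2}}$ and $\norm{u_1}_{H^{m_0-1,2}}$. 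Your proof does not exploit this improvement at all: you keep Lemma~\ref{lem:4} in its stated form, whose right-hand side still involves $\norm{s}_{H^{m_0,2}}$, and you feed that through Gronwall. To prove the lemma as stated, you must redo the estimate of $II$ using the tightened index count so that the $H^{m_0,2}$ norm never enters.
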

\begin{proof}
The proof goes almost the same with the proof of Lemma~\ref{lem:6}, except for a more
refined estimate on the curvature term. The observation is that without the approximating
term $\ep DD u_t$, there are only three terms left in the curvature term (\ref{equ:curv}).
Indeed, this term becomes
\begin{equation*}
\begin{split}
\abs{\ti Q} &\le |Q_1| + |Q_2| + |Q_3|\le C\abs{Q(\tilde Du, \tilde Du)}\\
&\le C\sum\abs{D^{j_1}\tilde Du}\cdots\abs{D^{j_b}\tilde Du},
\end{split}
\end{equation*}
where the summation is now over all indexes $(j_1, \cdots, j_b)$ satisfying
\begin{equation}\label{equ:index1}
j_1\ge j_2 \ge \cdots \ge j_b,\quad l\ge j_i \ge 0,\quad j_1 + \cdots + j_b + b \le l+2,\quad b \ge 3.
\end{equation}
The key is that the sum of the index in (\ref{equ:index1}) is $l+2$, which is one order lower than $l+3$ in (\ref{equ:index}). With this change, one can verify that all the estimates in
the rest part of proof of Lemma~\ref{lem:6} holds for $m_0-1$ instead of $m_0$.
\end{proof}

Now we are ready to prove the main theorem.

\begin{thm}
Suppose $u_0\in W^{k,2}(M_1,N), u_1\in W^{k-1,2}(M_1,T_{u_0}N)$,
where $k\ge m_0$. Then the Cauchy problem
 (\ref{equ:2'}),(\ref{equ:ini}) has a local solution $u\in L^\infty([0,T],W^{k,2}(M_1,N))$
with $u_t\in L^\infty([0,T],W^{k-1,2}(M_1,TN))$.
\end{thm}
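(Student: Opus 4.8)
The plan is to obtain the rough solution as a limit of the smooth solutions produced by Theorem~\ref{thm:smooth}, exploiting the fact that both the life-span and the a priori bounds of those solutions are controlled only by low-order norms of the data. First I would smooth the data: embedding $N\hookrightarrow\Real^K$ with a tubular neighbourhood carrying the smooth nearest-point projection $\pi_N$ and the fibrewise orthogonal projections $P(p):\Real^K\to T_pN$, I would set $u_0^n=\pi_N(\rho_n*u_0)$ and $u_1^n=P(u_0^n)(\rho_n*u_1)$ with $\rho_n$ a mollifier on $M_1$. Then $u_0^n\in C^\infty(M_1,N)$, $u_1^n\in C^\infty(M_1,T_{u_0^n}N)$, and since $u_1(x)\in T_{u_0(x)}N$ a.e.\ one checks $u_0^n\to u_0$ in $W^{k,2}$ and $u_1^n\to u_1$ in $W^{k-1,2}$; in particular $\norm{\nabla u_0^n}_{H^{k-1,2}}$ and $\norm{u_1^n}_{H^{k-1,2}}$ stay bounded uniformly in $n$, by Lemma~\ref{lem:equ}.

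Next, for each $n$ Theorem~\ref{thm:smooth} produces a smooth solution $u^n$ of~(\ref{equ:2'}),~(\ref{equ:ini}) on a maximal interval $[0,T_n^*)$, and I would show there is a uniform $T>0$ with $T_n^*>T$. For $k\ge m_0+1$ this is immediate, since then $\norm{\nabla u_0^n}_{H^{m_0,2}}$ and $\norm{u_1^n}_{H^{m_0,2}}$ are uniformly bounded and Theorem~\ref{thm:smooth} already gives a uniform lower bound on the life-span. For the borderline case $k=m_0$ a short continuation argument is needed, because the viscous scheme controls the life-span through the $H^{m_0,2}$-norm, one derivative more than the data: the a priori estimate underlying Lemma~\ref{lem:7}, applied at index $m_0-1$, bounds $\norm{\ti Du^n(t)}_{H^{m_0-1,2}}$ by a constant $K$ on a uniform interval $[0,T]$ depending only on the bounded quantities $\norm{\nabla u_0^n}_{H^{m_0-1,2}},\norm{u_1^n}_{H^{m_0-1,2}}$, and feeding this into~(\ref{eq:17}) at level $m_0$, where the top-order norm enters linearly with coefficient controlled by $K$, Gronwall's inequality keeps $\norm{\ti Du^n(t)}_{H^{m_0,2}}$ finite on $[0,T]$, so the continuation criterion of Theorem~\ref{thm:smooth} is never triggered before $T$. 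In all cases, on $[0,T]$ Lemma~\ref{lem:7} at index $k-1\ge m_0-1$ then gives
\[ \max_{t\in[0,T]}\norm{\ti Du^n(t)}_{H^{k-1,2}}\le C\big(\norm{\nabla u_0^n}_{H^{k-1,2}},\norm{u_1^n}_{H^{k-1,2}}\big)\le C \]
uniformly in $n$, and by Lemma~\ref{lem:equ} this means $u^n$ is bounded in $L^\infty([0,T],W^{k,2}(M_1,N))$ and $u^n_t$ in $L^\infty([0,T],W^{k-1,2}(M_1,TN))$.

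Finally I would pass to the limit. Weak-$*$ compactness gives, along a subsequence, $u^n\rightharpoonup u$ in $L^\infty([0,T],W^{k,2})$ and $u^n_t\rightharpoonup u_t$ in $L^\infty([0,T],W^{k-1,2})$, so $u$ lies in the claimed spaces with the same bounds. Since $k\ge m_0>m/2$, fixing $k'\in(m/2,k)$ and using the bound on $u^n_t$, the Aubin--Lions lemma yields $u^n\to u$ in $C([0,T],W^{k',2})\hookrightarrow C([0,T]\times M_1)$, i.e.\ uniformly; bounding $u^n_{tt}$ from~(\ref{equ:2'}) in a negative-order Sobolev space and using Aubin--Lions again gives $u^n_t\to u_t$ in $C([0,T],L^2)$, and with $W^{k,2}\hookrightarrow\hookrightarrow W^{1,2}$ also $\nabla u^n\to\nabla u$ in $C([0,T],L^2)$. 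Then $A(u^n)\to A(u)$ and $\nabla\Ld(u^n)\to\nabla\Ld(u)$ uniformly, the strong $L^2$-convergence of $\ti\nabla u^n$ handles the quadratic term $A(u^n)(\ti\nabla u^n,\ti\nabla u^n)$, and $\square u^n\to\square u$ distributionally, so $u$ solves~(\ref{equ:2'}); the constraints $u(t,x)\in N$, $u_t(t,x)\in T_{u(t,x)}N$ and the initial conditions pass to the limit (a.e., after a further subsequence). This produces the asserted local solution.

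The hard part will be the passage to the limit together with the borderline exponent. For the limiting step one needs genuinely strong $L^2$-convergence of $\ti\nabla u^n$ to control the nonlinearity, which forces one to extract time-equicontinuity from the equation itself and to verify that $k\ge m_0$ is exactly what makes $W^{k,2}\hookrightarrow C^0$ and makes the Aubin--Lions bookkeeping close; and the case $k=m_0$ requires the continuation argument above, since the viscosity method only controls the life-span one derivative above the data, so one must recover a uniform life-span from Lemma~\ref{lem:7} and the fact that the highest norm enters~(\ref{eq:17}) linearly. A minor preliminary point is arranging the smooth approximating data to respect the constraint $u_1^n\in T_{u_0^n}N$ while still converging in the right norms.
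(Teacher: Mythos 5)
Your proposal follows the same architecture as the paper's proof: mollify the data into smooth $N$-valued pairs respecting the tangency constraint, invoke Theorem~\ref{thm:smooth} to produce smooth solutions $u^n$, use the \emph{non-viscous} a priori estimate of Lemma~\ref{lem:7} (which drops to index $m_0-1$) to obtain bounds depending only on $\norm{u_0}_{W^{k,2}},\norm{u_1}_{W^{k-1,2}}$, extract a weak-$*$ limit, and pass to the limit in the weak formulation. The substance is the same.

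There are, however, two places where your write-up is genuinely more careful than the paper's, and it is worth recording them. First, the uniform life-span. The paper's proof simply asserts that ``the estimate (\ref{eq:8}) is uniform with respect to $i$,'' but the time interval $[0,T]$ appearing in Lemma~\ref{lem:7} is inherited from Theorem~\ref{thm:smooth} and a priori depends on $\norm{\nabla u_0^i}_{H^{m_0,2}}$, $\norm{u_1^i}_{H^{m_0,2}}$ -- one derivative beyond what the $W^{m_0,2}$-convergence of $u_0^i$ controls. Your continuation argument for the borderline case $k=m_0$ -- running the $H^{m_0-1,2}$ a priori bound on a uniform interval and then using the linear appearance of the top-order norm in~(\ref{eq:17}) to keep $\norm{\ti Du^n}_{H^{m_0,2}}$ finite -- closes this gap. (An equivalent, perhaps slightly cleaner, route is to note that Lemma~\ref{lem:7} yields finite $H^{k,2}$ bounds for every $k$ on the common interval $[0,T_{m_0-1}]$, so the smooth solution cannot blow up there and the maximal time of each $u^n$ must exceed $T_{m_0-1}$.) Second, the limit of the nonlinearity. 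The paper carries out the Rellich argument ``when $k\ge m_0+1$,'' since $W^{k-1,2}\hookrightarrow\hookrightarrow C^0$ needs $k-1>m/2$, which fails at $k=m_0$ for even $m$. Your Aubin--Lions bookkeeping -- strong convergence of $u^n$ in $C([0,T]\times M_1)$ via intermediate $W^{k',2}$ with $m/2<k'<m_0$, and strong $L^2$-convergence of $\ti\nabla u^n$ from a negative-order bound on $u^n_{tt}$ -- handles the quadratic term $A(u^n)(\ti\nabla u^n,\ti\nabla u^n)$ for all $k\ge m_0$, which is exactly what the statement requires. So your proof is correct, same route, but tighter at the borderline exponent.
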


\begin{proof}
Since $u_0\in W^{k,2}(M_1,N)$ with $k\ge m_0$ larger than the
borderline $m/2$ for Sobolev imbedding into $C^0(M_1,N)$, we can
approximate $u_0$ by smooth maps in $C^\infty(M_1,N)$(see \cite{DW}
for a proof). Namely, we may select a sequence $(u_0^i, u_1^i)\in
C^\infty(M_1,TN)$, such that
$$u_0^i \to u_0 \mbox{~in~} W^{k,2}(M_1, N),
u_1^i \to u_1 \mbox{~in~} W^{k-1,2}(M_1, \Real^{2K}).$$ Then for any
$i\ge 1$ and initial data $(u_0^i, u_1^i)$, there exits a local
solution $u^i$ which satisfies (\ref{eq:8}). Since as $i\to \infty$
\begin{align*}
  &\norm{u_0^i}_{W^{k,2}} \to \norm{u_0}_{W^{k,2}} \\
  &\norm{u_1^i}_{W^{k-1,2}} \to \norm{u_1}_{W^{k-1,2}},
\end{align*}
the estimate (\ref{eq:8}) is uniform with respect to $i$ and only
depends on $\norm{u_0}_{W^{k,2}}$ and $\norm{u_1}_{W^{k-1,2}}$.
Hence
\begin{align}
  \label{eq:9}&\max_{t\in [0,T]} \norm{u^i}_{W^{k,2}} \le C(\norm{u_0}_{W^{k,2}},\norm{u_1}_{W^{k-1,2}}), \\
  \label{eq:10}&\max_{t\in [0,T]} \norm{u^i_t}_{W^{k-1,2}} \le  C(\norm{u_0}_{W^{k,2}},\norm{u_1}_{W^{k-1,2}}).
\end{align}
Therefore we can find a subsequence which we still denote by $u^i$,
such that
\begin{equation*}
\begin{split}
 u^i\rightharpoonup u &\text{ ~~in~~ } L^\infty([0,T],W^{k,2}(M_1,N)), \\
 u^i_t\rightharpoonup u_t &\text{ ~~in~~ } L^\infty([0,T],W^{k-1,2}(M_1,TN))
\end{split}
\end{equation*}
where $\rightharpoonup$ denotes the weak $*$ convergence.

The limit $u$ is a strong solution to ~(\ref{equ:2}). To show this
we only have to verify that for any $v\in
C^\infty(M_1\times[0,T],\rk)$, there holds
\begin{equation}\label{e11}
  \int_0^T \int_{M_1} \lg \square u - A(u)(\ti \nabla u,
  \ti \nabla u), v \rg = - \int_0^T \int_{M_1} \lg J(u)V(u), v \rg.
\end{equation}
Indeed, since $u^i$ is a solution, we have
\begin{equation}\label{e15}
  \int_0^T \int_{M_1} \lg \square u^i - A(u^i)(\ti \nabla u^i, \ti \nabla u^i), v \rg = - \int_0^T \int_{M_1} \lg J(u^i)V(u^i), v \rg,
\end{equation}
And the estimates ~(\ref{eq:9}),~(\ref{eq:10}) holds true. So we have
\[\max_{t\in [0,T]}\norm{\ti \nabla u^i}_{W^{k-1,2}} = \max_{t\in [0,T]}\norm{u^i_t + \nabla u^i}_{W^{k-1,2}} \le C. \]
when $k\ge m_0 +1$, by Sobolev, we know that for all $t \in [0,T]$
\begin{equation}\label{e9}
  \ti \nabla u^i \to \ti \nabla u \text{ ~~in~~ } C^0(M_1,N).
\end{equation}
and
\begin{equation}\label{e10}
  \Delta u^i \to \Delta u \text{ ~~in~~ } L^\infty([0,T],L^2(M_1,N)).
\end{equation}
The above convergence implies
\begin{equation}\label{e12}
  \lim_{i\to \infty}\int_0^T \int_{M_1} \lg -\Delta u^i - A(u^i)(\ti \nabla u^i, \ti \nabla u^i), v \rg =
\int_0^T \int_{M_1} \lg -\Delta u - A(u)(\ti \nabla u, \ti \nabla u), v \rg,
\end{equation}
and
\begin{equation}\label{e13}
 \lim_{i \to \infty} \int_0^T \int_{M_1} \lg J(u^i)V(u^i), v \rg = \int_0^T \int_{M_1} \lg J(u)V(u), v \rg.
\end{equation}
On the other hand, we have
\begin{equation}\label{e14}
\lim_{i\to \infty} \int_0^T\int_{M_1} \lg u^i_{tt}, v \rg = -\int_0^T\int_{M_1} \lg u_{t}, v_t \rg + \int_{M_1}(\lg u_t(T), v(T)\rg - \lg u_t(0), v(0)\rg).
\end{equation}
Now we can deduce from ~(\ref{e15}),~(\ref{e12}),~(\ref{e13}) and ~(\ref{e14}) that
\begin{align*}
  -\int_0^T\int_{M_1} \lg u_{t}, v_t \rg + &\int_{M_1}(\lg u_t(T), v(T)\rg - \lg u_t(0), v(0)\rg) =\\
 &\int_0^T \int_{M_1} \lg \Delta u + A(u)(\ti \nabla u, \ti \nabla u), v \rg - \int_0^T \int_{M_1} \lg J(u)V(u), v \rg.
\end{align*}
This means $u_{tt} \in L^2([0,T]\times M_1, N)$, so we have proved~(\ref{e11}), hence the theorem.

\end{proof}

Finally, we prove the uniqueness of the local solution. If $u,v$ are
two solutions to Cauchy problem~(\ref{equ:2'}),~(\ref{equ:ini}), we
need to show $u = v$. Generally, one may consider the difference
$u-v$ between $u$ and $v$. But in order to do the substraction, one
needs to consider the embedding $N \hookrightarrow \rk$. The
following computation also relies on such an embedding.

\begin{thm}\label{thm:unique}
Suppose $u_0\in W^{k,2}(M_1,N), u_1\in W^{k-1,2}(M_1,T_{u_0}N)$,
where $k\ge m_0$ for $m\ge 2$ and $k=2$ for $m=1$. Then the local
solution to (\ref{equ:2'}), (\ref{equ:ini}) is unique in class
$W^{k,2}$.
\end{thm}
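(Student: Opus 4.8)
The plan is to close a Gronwall estimate for the flat energy of the difference $w:=u-v$, viewed as an $\rk$-valued map through the embedding $N\hookrightarrow\rk$. Since $u$ and $v$ solve the Cauchy problem with the \emph{same} data, $w\in L^\infty([0,T],W^{k,2}(M_1,\rk))$, $w_t\in L^\infty([0,T],W^{k-1,2}(M_1,\rk))$ and $w(0)=0$, $w_t(0)=0$. Subtracting the two copies of~(\ref{equ:2'}) and using bilinearity and symmetry of the second fundamental form (for $p\in N$ we extend $A(p)(\cdot,\cdot)$ to $\rk$ by precomposing with the orthogonal projection $P_p$ onto $T_pN$), one finds
\[
\square w=\bigl(A(u)-A(v)\bigr)(\ti\nabla u,\ti\nabla u)+A(v)(\ti\nabla w,\ti\nabla u+\ti\nabla v)-\bigl(\nabla\Ld(u)-\nabla\Ld(v)\bigr).
\]
Setting $\mathcal E(t):=\tfrac12\int_{M_1}\bigl(\abs{w_t}^2+\abs{\nabla w}^2+\abs{w}^2\bigr)$, so that $\mathcal E(0)=0$, the right-hand side belongs to $L^\infty([0,T],L^2(M_1))$ (these are precisely the product bounds carried out below), hence $w_{tt}=\Delta w+(\text{r.h.s.})\in L^\infty_tL^2_x$ and $w$ is a finite-energy weak solution of a linear wave equation; integrating by parts in $x$ gives the energy identity
\[
\dif{t}\mathcal E(t)=\int_{M_1}\lg w_t,\square w\rg+\lg w,w_t\rg .
\]

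The only a priori information I would use is that, as $u$ and $v$ lie in the solution class, $\Norm{\ti\nabla u}_{L^\infty_tW^{k-1,2}_x}$ and $\Norm{\ti\nabla v}_{L^\infty_tW^{k-1,2}_x}$ are finite; this is the only place the hypothesis on $k$ enters, because $W^{k-1,2}(M_1^m)\hookrightarrow L^{2m}$ for $k\ge m_0$ (when $m\ge 2$), while for $m=1$ the choice $k=2$ puts $\ti\nabla u,\ti\nabla v$ in $L^\infty$. The potential term is harmless: $\abs{\nabla\Ld(u)-\nabla\Ld(v)}\le C\abs{w}$ since $\Ld$ is smooth on the compact $N$, so its contribution to the energy identity is $\le C\int_{M_1}\abs{w_t}\abs{w}\le C\mathcal E$; and $\bigl(A(u)-A(v)\bigr)(\ti\nabla u,\ti\nabla u)$ is $\le C\abs{w}\,\abs{\ti\nabla u}^2$ by the Lipschitz dependence of $A$ on the base point. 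The delicate term is $\lg w_t,A(v)(\ti\nabla w,\ti\nabla u+\ti\nabla v)\rg$, and the key observation is that $A(v)(\cdot,\cdot)$ takes values in $(T_vN)^\perp$, so only the normal component $(I-P_v)w_t$ contributes to the pairing; since $w_t=u_t-v_t$ with $v_t\in T_vN$, $u_t\in T_uN$, one has $(I-P_v)w_t=(P_u-P_v)u_t$, hence $\abs{(I-P_v)w_t}\le C\abs{u-v}\,\abs{u_t}=C\abs{w}\,\abs{u_t}$. Thus this term is also bounded by $C\abs{w}\,\bigl(\abs{w_t}+\abs{\nabla w}\bigr)\bigl(\abs{\ti\nabla u}^2+\abs{\ti\nabla v}^2\bigr)$, i.e. after a pointwise gain of a factor $\abs{w}$ it has the \emph{same} structure as the $\bigl(A(u)-A(v)\bigr)$ term.

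It then remains to bound $\int_{M_1}\abs{w}\,\bigl(\abs{w_t}+\abs{\nabla w}\bigr)\bigl(\abs{\ti\nabla u}^2+\abs{\ti\nabla v}^2\bigr)$. By H\"older with exponents matched to the Sobolev embeddings on $M_1$ — for $m\ge3$ one uses $W^{1,2}(M_1)\hookrightarrow L^{2m/(m-2)}$ together with $\ti\nabla u,\ti\nabla v\in L^{2m}$, the admissible split being $\tfrac{m-2}{2m}+\tfrac12+\tfrac1m=1$; for $m=2$, $W^{1,2}(M_1)\hookrightarrow L^q$ for all $q<\infty$; for $m=1$ one simply uses $\abs{\ti\nabla u},\abs{\ti\nabla v}\in L^\infty$ — the two factors coming from $w$ are each controlled by $C\mathcal E(t)^{1/2}$ and the factors $\ti\nabla u,\ti\nabla v$ by the a priori constant, so the whole right-hand side of the energy identity is $\le C\mathcal E(t)$ with $C$ depending only on $N$, $M_1$, $T$ and the a priori bounds. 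Gronwall's inequality together with $\mathcal E(0)=0$ then forces $\mathcal E\equiv0$ on $[0,T]$, so $w\equiv0$ and $u=v$.

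The step I expect to be the main obstacle is exactly the estimate of $\lg w_t,A(v)(\ti\nabla w,\ti\nabla u)\rg$ at the borderline regularity $k=m_0$ with $m\ge2$: there $\ti\nabla u$ lies only in the critical space $W^{[m/2],2}$ and is \emph{not} bounded, so a crude bound $\abs{A(v)(\ti\nabla w,\ti\nabla u)}\le C\abs{\ti\nabla w}\abs{\ti\nabla u}$ produces the uncontrollable term $\int_{M_1}\abs{w_t}^2\abs{\ti\nabla u}$, which would only yield a logarithmic (Osgood) or even sub-linear differential inequality for $\mathcal E$. Exploiting the normal-bundle structure to trade this for $\int_{M_1}\abs{w}\abs{w_t}\abs{\ti\nabla u}^2$ removes the loss and keeps the estimate linear in $\mathcal E$; it is also the reason one must work through the embedding $N\hookrightarrow\rk$, as announced before the statement. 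For $m=1$ the hypothesis $k=2$ already gives $\ti\nabla u\in L^\infty$ and the argument is routine.
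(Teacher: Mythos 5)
Your argument is correct and follows essentially the same route as the paper: both embed $N$ in $\Real^K$, track a flat energy of $w=u-v$, exploit the orthogonality of the second fundamental form to gain a factor of $\abs{u-v}$ in the dangerous cross term, and then close a Gronwall estimate via the same Sobolev embeddings ($W^{k-1,2}\hookrightarrow L^{2m}$, $W^{1,2}\hookrightarrow L^{2m/(m-2)}$, and $L^\infty$ when $m=1$). Your bookkeeping differs only in phrasing the orthogonality step through $(I-P_v)w_t=(P_u-P_v)u_t$ rather than the paper's algebraic insertion of $A(u)-A(v)$, and in adding the zeroth-order term $\abs{w}^2$ to the energy, which cleanly absorbs the $\norm{u-v}_{L^2}$ contributions the paper handles implicitly.
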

\begin{proof}
  Assume $u,v$ are two local solutions to (\ref{equ:2'}),
  (\ref{equ:ini}) satisfying
\begin{equation*}
  u,v\in L^\infty([0,T],W^{k,2}(M_1,N));\quad
  u_t,v_t\in L^\infty([0,T],W^{k-1,2}(M_1,TN)).
\end{equation*}
  Since we embed $N$ into a Euclidean space
  $\rk$, we can compute
  \begin{align*}
    &\quad~~\frac 1 2 \dif{t}\norm{\ti D(u - v)}^2_{L^2} \\
    &= \int_{M_1} \lg D_t(u-v), D_t^2(u-v)\rg - \lg D_t(u-v), \Delta(u-v) \rg \\
    &= \int_{M_1} \lg D_t(u-v), (A(u)(\ti Du, \ti Du)-A(v)(\ti Dv, \ti Dv))  -(J(u)V(u)-J(v)V(v)) \rg \\
    &= \int_{M_1} \lg u_t, (A(u)(\ti Dv, \ti Dv)-A(v)(\ti Dv, \ti Dv))\rg - \lg v_t,A(u)(\ti Du, \ti Du) - A(v)(\ti Du, \ti Du)\rg\\
            &~~~~~~~~~~~~~~~~~~~~~~~~~~~~~~~~~~~~~~~~~~~~~~~~~~~~~~+ \lg u_t-v_t,  -(J(u)V(u)-J(v)V(v)) \rg \\
    &\le \int_{M_1} |A(u)-A(v)|(\lg u_t,|\ti Dv|^2\rg - \lg v_t,|\ti Du|^2\rg) + C\int_{M_1}|u_t-v_t||u-v|\\
    &\le C\int_{M_1}|u-v||\ti Du-\ti Dv|(|\ti Du|^2+|\ti Dv|^2) + C\int_{M_1}|u_t-v_t||u-v|\\
    &\le C\norm{\ti Du-\ti Dv}_{L^2}\cdot(\norm{|u-v|(|\ti Du|^2+|\ti
    Dv|^2)}_{L^2}+\norm{u-v}_{L^2}).
  \end{align*}
  Hence we get
  \begin{equation}
    \dif{t}\norm{\ti D(u - v)}_{L^2} \le C(\norm{|u-v|(|\ti Du|^2+|\ti
    Dv|^2)}_{L^2}+\norm{u-v}_{L^2}).
  \end{equation}

  If $m\le 3$, we have $k \ge 2$. By Sobolev embedding $W^{2,2} \hookrightarrow W^{1,6}$, we get
    \begin{equation}\label{eq:18}
    \begin{aligned}
    \dif{t}\norm{\ti D(u - v)}_{L^2} &\le C\norm{u-v}_{L^6}(\norm{\ti Du}^2_{L^6}+\norm{\ti Dv}^2_{L^6})+C\norm{u-v}_{L^2}\\
    &\le C\norm{\ti Du-\ti Dv}_{L^2}(\norm{\ti Du}_{W^{1,2}} + \norm{\ti
    Dv}_{W^{1,2}}).
  \end{aligned}
  \end{equation}
  If $m>3$, we have Sobolev embedding $W^{[\frac{m}{2}]+1,2} \hookrightarrow W^{1,2m}$. Thus
  \begin{equation}\label{eq:19}
    \begin{aligned}
    \dif{t}\norm{\ti D(u - v)}_{L^2} &\le C(\norm{u-v}_{L^{\frac{2m}{m-2}}}(\norm{\ti Du}^2_{L^{2m}}+\norm{\ti Dv}^2_{L^{2m}})+C\norm{u-v}_{L^2}\\
    &\le C\norm{\ti Du-\ti Dv}_{L^2}(\norm{\ti Du}_{W^{[\frac{m}{2}],2}} + \norm{\ti
    Dv}_{W^{[\frac{m}{2}],2}}+1).
    \end{aligned}
  \end{equation}

    From ~(\ref{eq:18}),~(\ref{eq:19}) and Lemma~\ref{lem:6}, it follows that, if $(u_0, u_1)\in W^{k,2}(M_1,N)\times W^{k-1,2}(M_1,TN)$, there holds
    \[ \dif{t}\norm{\ti D(u - v)}_{L^2} \le C\norm{\ti Du-\ti Dv}_{L^2}. \]
    By Gronwall's inequality, we finally get
    \[ \norm{\ti D(u(t) - v(t))}_{L^2} \le C\norm{\ti D(u(0) - v(0))}_{L^2} = 0. \]
    Thus we complete the proof.
\end{proof}

\begin{rem}
We can also compute the difference between $u$ and $v$ intrinsically
by using parallel translation. Mcgahahan~\cite{Mc} used this method
to prove the continuous dependence of solutions to \sch flow on
initial data. Same method can by applied to prove continuous
dependence of initial data to Cauchy
problem~(\ref{equ:2'}),~(\ref{equ:ini}).
\end{rem}

\begin{rem}
  We can also consider \sch flow with potential, i.e.
  \begin{equation}\label{eq:schp}
    \ut = J(u)\tau(u) + J(u)\nabla F(u),
  \end{equation}
  where $F$ is a smooth function. Actually, we can prove the local existence of~(\ref{eq:schp}) by the same method.
\end{rem}

\section{Global existence in $1+1$ dimension}

In this section, we follow the method in~\cite{SS} to prove
Theorem\ref{thm:global}. Note that when $m=1$, $m_0 = 1$ and $k\ge
2$ in Theorem \ref{thm:sch}.

\begin{proof}[Proof of Theorem~\ref{thm:global}]
  According to Theorem~\ref{thm:sch}, we already have a unique local solution $u\in L^\infty([0,T),W^{2,2}(S^1,N))$. Moreover, $u$ satisfies the
  estimate~(\ref{eq:6}). Now we need to derive a global estimate. Since $u$ satisfies equation~(\ref{equ:2'}), i.e.
  \begin{equation}\label{e2}
    \square u = A(u)(\ti Du,\ti Du) - J(u)V(u).
  \end{equation}
  Applying a first order spatial derivative $\nabla$ to this equation, we get
  \begin{align*}
    \square (\nabla u) &= \nabla(A(u)(\ti Du,\ti Du)) - \nabla(J(u)V(u))\\
    &= \nabla A(u)(\ti Du, \ti Du, \nabla u) + 2A(u)(\nabla\ti Du, \ti Du)
    - J(u)\nabla V(u)\cdot \nabla u.
  \end{align*}
  But for the second fundamental form $A$, we have
  \[ \lg u_t, A(u)(\cdot, \cdot) \rg =0.\]
  Thus
  \begin{equation}\label{e3}
    \lg \nabla u_t, A(u)(\nabla\ti Du, \ti Du) \rg = \lg u_t, \nabla A(u)(\nabla\ti Du, \ti Du, \nabla u) \rg.
  \end{equation}
  The above equality implies
  \begin{eqnarray}\label{eq:20}
    \nonumber \frac 1 2 \dif{t} \norm{\nabla\ti Du}^2_{L^2} &=& \int_{M_1} \lg \square (\nabla u), \nabla u_t \rg\\
    \nonumber &=& \int_{M_1} \lg \nabla A(u)(\ti Du, \ti Du, \nabla u) + 2A(u)(\nabla \ti Du, \ti Du),\nabla u_t \rg\\
        \nonumber&~~&~~~ - \lg J(u)\nabla V(u)\cdot \nabla u, \nabla u_t \rg\\
    \nonumber &= &\int_{M_1} \lg \nabla A(u)(\ti Du, \ti Du, \nabla u),\nabla u_t \rg - \lg J(u)\nabla V(u)\cdot \nabla u, \nabla u_t \rg \\
        \nonumber&~~&~~~+ 2\int_{M_1} \lg u_t, \nabla A(u)(\nabla \ti Du, \ti Du, \nabla u)\\
    &\le& C\int_{M_1} |\ti Du|^3|\nabla \ti Du| + |\ti Du||\nabla \ti
    Du|.
  \end{eqnarray}
  From H\"older's inequality,
    \begin{equation}\label{eq:21}
        \int_{M_1} |\ti Du|^3|\nabla \ti Du| \le C\norm{\ti Du}^3_{L^6}\norm{\nabla \ti
        Du}_{L^2}.
    \end{equation}
  When $m = 1$, it follows from the classic Gagliardo-Nirenberg interpolation inequality and Kato's inequality that
  \begin{equation}\label{eq:22}
    \norm{\ti Du}_{L^6} \le \norm{\nabla \ti Du}^a_{L^2}\norm{\ti
    Du}^{(1-a)}_{L^2},
  \end{equation}
  where
    \begin{equation*}
    \frac 1 6 = a(\frac 12 - 1) + (1-a)\frac 12.
  \end{equation*}
  i.e. $a = \frac 1 3$. Hence we arrive at a Gronwall-type inequality from~(\ref{eq:20}),~(\ref{eq:21}) and ~(\ref{eq:22})
  \begin{equation*}
    \dif{t} \norm{\nabla \ti Du}^2_{L^2} \le \norm{\nabla \ti Du}^2_{L^2}\norm{\ti Du}^2_{L^2}
  \end{equation*}
  Combining this together with the energy inequality $\norm{\ti Du}^2_{L^2} \le C$, we obtain
   \begin{equation}\label{eq:23}
     \norm{\nabla \ti Du}^2_{L^2} \le C(t), \forall t\in \Real.
   \end{equation}

  Now we can derive the global existence from Theorem~\ref{thm:sch} and ~(\ref{eq:23}).
  Indeed, if this is not the case, assume the maximal existence time interval of $u$ is $[0,T)$. It follows from Lemma~\ref{lem:6} that $T$ only depends on the initial data, i.e.
  \[T = T(\norm{\ti D u(0)}_{H^{1,2}}).\]
  We may choose a small positive number $\ep>0$, and consider the Cauchy problem~(\ref{equ:3}) with initial data $u(T-\ep)$.
  Then Theorem~\ref{thm:sch} guarantees the existence of another local solution
  $u' \in L^\infty([0,T'),W^{2,2}(S^1,N))$, where
  \[ T' = T(\norm{\ti D u(T-\ep)}_{H^{1,2}}). \]
  Moreover, by the uniqueness Theorem~\ref{thm:unique}, $u$ and $u'$
  coincides on the overlapped time interval. Now, if we patch $u, u'$ together,
  we get a solution to (\ref{equ:2'}),(\ref{equ:ini}) on the time interval $[0,T-\ep + T')$.
  The estimate (\ref{eq:23}) tells us that $\norm{\ti D u(t)}_{H^{1,2}}$ is uniformly bounded for all $t\in [0,T)$.
  Consequently, if $\ep$ is small enough, we have $T-\ep + T' >T$. This contradicts to the maximality of $T$.
  Hence, we must have $T = \infty$.
\end{proof}

\appendix

\section{Local existence of the approximation}

In this appendix, we use a fixed point argument to prove the local
existence of the Cauchy problem of equation~(\ref{equ:4}):
\begin{equation}\label{a:1}
\left\{
\begin{aligned}
&u_{tt} - \ep \Delta u_t = F(u,u_t)\\
&u(0) = u_0, u_t(0) = u_1
\end{aligned}\right.
\end{equation}
where
\[F(u,u_t) = \Delta u - J(u)V(u) +A(u)(\nabla u+u_t,\nabla
u+u_t) - \ep T(u)(\Delta u_t)\] and
$$ u_0\in C^\infty(M_1, N), u_1\in C^\infty(M_1, TN)$$
satisfy the following condition:
$$u_1(x) \in T_{u_0(x)}N,  \forall x\in M_1.$$

Consider the Banach spaces
$$ X = \{ v=(v_1,v_2)\in C^3(M_1,N)\times C^2(M_1,TN); v_2(x) \in T_{v_1(x)}N,  \forall x\in M_1\} $$
with the norm
\[ \norm{v}_X = \norm{v_1}_{C^3(M_1)} + \norm{v_2}_{C^2(M_1)} \]
and
$$ Y = C^1(M_1, N) $$
with the norm
\[ \norm{f}_Y = \norm{f}_{C^1(M_1)}. \]
We recall the expression of $T(u)(\Delta u_t)$ given by~(\ref{a}),
i.e.
\begin{equation}
T(u)(\Delta u_t) = A(u)(\nabla u_t, \nabla u) + \text{div}
(A(u)(u_t, \nabla u)).
\end{equation}
From this equality, one can see that if $(u,u_t) \in X = C^3\times
C^2$, then $F(u,u_t)\in C^1$. Therefore, $F$ is a mapping from $X$
into $Y$. In fact, we have

\begin{lem}\label{a1}
$F$ is a locally Lipschitz map from $X$ to $Y$.
\end{lem}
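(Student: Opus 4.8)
The plan is to exploit that $F$ is a finite sum of terms of two kinds: a linear term $\Delta v_1$ which is bounded (hence globally Lipschitz) from $C^3(M_1)$ into $C^1(M_1)$, and ``superposition-type'' terms obtained by composing a smooth tensor field on $N$ with $v_1$ and then contracting with finitely many covariant derivatives of $v_1$ and $v_2$. Since $N$ is compact I would first extend $J$, $V$, the second fundamental form $A$ and its covariant derivative $\nabla A$ smoothly to a fixed tubular neighbourhood $U$ of $N\subset\rk$; all these extensions, and all their derivatives, are then bounded on $U$. The abstract fact used repeatedly is that for a smooth map $\Phi:U\to\rk$ the superposition operator $v_1\mapsto\Phi(v_1)$ is locally Lipschitz from $C^1(M_1,U)$ into $C^1(M_1,\rk)$ — this is the chain rule together with the mean value theorem applied on the compact set where $v_1,w_1$ take their values — and that $C^1(M_1)$ is a Banach algebra, so products of such operators with bounded derivatives of $v_1,v_2$ are again locally Lipschitz. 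Hence it suffices to check term by term that the output lands in $C^1(M_1)$ and that the natural difference estimate closes.

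First I would do the derivative bookkeeping using the splitting
\[
F(v_1,v_2)=\Delta v_1-J(v_1)V(v_1)+A(v_1)(\nabla v_1+v_2,\nabla v_1+v_2)-\ep\,T(v_1)(\Delta v_2),
\]
\[
T(v_1)(\Delta v_2)=A(v_1)(\nabla v_2,\nabla v_1)+\mathrm{div}\big(A(v_1)(v_2,\nabla v_1)\big).
\]
The term $J(v_1)V(v_1)$ is a smooth function of $v_1\in C^3$, hence in $C^3\subset C^1$. Writing $p=\nabla v_1+v_2\in C^2$, one has $\nabla\big(A(v_1)(p,p)\big)=(\nabla A)(v_1)[\nabla v_1](p,p)+2A(v_1)(\nabla p,p)$ with $\nabla p=\nabla^2 v_1+\nabla v_2\in C^1$, so this term lies in $C^1$. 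The tightest group is $\ep T(v_1)(\Delta v_2)$: here $A(v_1)(\nabla v_2,\nabla v_1)\in C^1$ because $\nabla v_2\in C^1$ and $\nabla v_1\in C^2$, while $\mathrm{div}\big(A(v_1)(v_2,\nabla v_1)\big)$ already carries one derivative and, to land in $C^1$, the remaining derivative falls on $v_2$ (needs $v_2\in C^2$), on $\nabla v_1$ (needs $v_1\in C^3$), or on the composition $A(v_1)$ (needs $v_1\in C^2$) — all available in $X$. This shows $F$ maps $X$ into $Y$, and explains why the spaces are taken to be $X=C^3\times C^2$, $Y=C^1$.

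Next, for the Lipschitz estimate I would fix $R>0$ and take $v=(v_1,v_2),\,w=(w_1,w_2)\in X$ with $\norm{v}_X,\norm{w}_X\le R$; then $v_1,w_1$ take values in a fixed compact $K\subset U$. For each term I telescope, e.g. $a(v_1)b(v)-a(w_1)b(w)=\big(a(v_1)-a(w_1)\big)b(v)+a(w_1)\big(b(v)-b(w)\big)$, bound every surviving factor in $C^1$ by a constant $C(R)$ (using boundedness of the extensions and their derivatives on $K$ together with $\norm{v}_X,\norm{w}_X\le R$), and bound each difference factor by $\norm{v-w}_X$: for instance $\norm{J(v_1)-J(w_1)}_{C^1}\le C(R)\norm{v_1-w_1}_{C^1}$ by the $C^1$ mean value estimate, and $\norm{\nabla p_v-\nabla p_w}_{C^0}\le\norm{v-w}_X$. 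Summing the contributions of the finitely many terms yields $\norm{F(v)-F(w)}_{C^1}\le L(R)\,\norm{v-w}_X$.

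I expect the only real (but purely technical) obstacle to be keeping track of the top-order term $\mathrm{div}\big(A(v_1)(v_2,\nabla v_1)\big)$ and its difference, since this is precisely the term that dictates the choice of $X$ and $Y$; once the product rule is fully expanded and the composition and mean-value estimates for the extended $A$ and $\nabla A$ are in hand, every remaining step is a routine product estimate in the Banach algebra $C^1(M_1)$.
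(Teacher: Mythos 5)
Your proposal is correct and follows essentially the same route as the paper: split $F$ into the same three groups, telescope the differences, bound composed geometric quantities (smooth extensions of $J$, $V$, $A$, $\nabla A$ restricted to a compact set of values) in $C^1$ via the mean-value theorem, and close using the Banach-algebra property of $C^1(M_1)$. The paper simply presents the telescoping decomposition and states the resulting bounds $C(1+\norm{v}_X^2+\norm{w}_X^2)\norm{v-w}_X$ without spelling out the abstract facts (superposition operators are locally Lipschitz on $C^1$, products of $C^1$ functions are $C^1$) that you make explicit; the substance and the choice of spaces $X=C^3\times C^2$, $Y=C^1$ are identical.
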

\begin{proof}
For any $v=(v_1,v_2), w=(w_1,w_2)\in X$, we have
\begin{align*}
\norm{F(v) - F(w)}_Y &\le \norm{\Delta v_1- \Delta w_1 + J(v_1)V(v_1) - J(w_1)V(w_1)}_Y\\
&~~~~ + \norm{A(v_1)(\nabla v_1 + v_2,\nabla v_1 + v_2) -
A(w_1)(\nabla w_1 + w_2,\nabla w_1 + w_2)}_Y\\ &~~~~  + \ep \norm{T(v_1)(\Delta v_2) - T(w_1)(\Delta w_2)}_Y\\
&\le I+II+III.
\end{align*}
Obviously, we have
\[ I \le \norm{v-w}_{X}. \]
For the second fundamental form,
\begin{align*}
II &\le \norm{|A(v_1) - A(w_1)||\nabla v_1+v_2|^2}_Y \\&~~~~+
\norm{A(w_1)(|\nabla v_1-\nabla w_1|+ |v_2-w_2|)(|\nabla
v+v_2|+|\nabla w+w_2|)}_Y\\
&\le C(\norm{v}_X^2 + \norm{w}_X)\norm{v-w}_X.
\end{align*}
As for the third term, by a similar computation, we have
\begin{align*}
III &\le \ep \norm{A(v_1)(\nabla v_2, \nabla v_1)- A(w_1)(\nabla
w_2, \nabla w_1)}_Y\\
&~~~~~~~~~~~~~~~~~~~~~ + \ep\norm{\text{div}
(A(v_1)(v_2, \nabla v_1) - A(w_1)(w_2, \nabla w_1))}_Y\\
&\le C(\norm{v}_X^2 + \norm{w}_X^2)\norm{v-w}_X.
\end{align*}
Thus we obtain
\[ \norm{F(v) - F(w)}_Y \le C(1+\norm{v}_X^2 + \norm{w}_X^2)\norm{v-w}_X, \]
which means $F$ is locally Lipschitz.
\end{proof}

It's well-known that there exists a heat kernel on compact manifold
$M_1$, which we denote by $H(x,y,t)$. We first fix $u\in X$. Using
the heat kernel, one can solve the linear parabolic equation
\begin{equation}
\left\{
\begin{aligned}
&v_{t} - \ep \Delta v = F(u)\\
&v(0) = u_1
\end{aligned}\right.
\end{equation}
by
\[ v(x,t)=\Psi(u) = \int_{M_1}H(x,y,\ep t)u_1(y)dy + \int_0^t\int_{M_1}H(x,y,\ep(t-s))F(u(y))dydt. \]
Then one can go on to solve an ordinary equation
\begin{equation}
\left\{
\begin{aligned}
&w_t = \Psi(u)\\
&w(0) = u_0.
\end{aligned}\right.
\end{equation}
The solution is given by $$w(t) = \Phi(u) = \int_0^t \Psi(u)(s)ds +
u_0.$$

Now we are ready to derive a fixed point argument. Fix $\delta>0$,
and set \begin{align*}
Z &= \{ u\in C([0,T],C^3(M_1))\cap C^1([0,T],C^2(M_1));\\
&~~~~~~~~~~~~~~~~~~~~~~~(u,u_t)|_{t=0} =
(u_0,u_1),\norm{(u(t),u_t(t))-(u_0,u_1)}_X \le \delta \}
\end{align*}
with the norm
$$\norm{u}_Z = \sup_{t\in[0,T]}\norm{(u(t),u_t(t))}_X.$$

\begin{lem}
$\Phi:Z\to Z$ is a contraction if $T$ is sufficiently small.
\end{lem}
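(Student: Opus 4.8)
The plan is to invoke the Banach contraction mapping principle for $\Phi$ on the complete metric space $Z$. First note that $Z$ is a \emph{closed} subset of the Banach space $C([0,T],C^3(M_1,\rk))\cap C^1([0,T],C^2(M_1,\rk))$ with the norm $\norm{\cdot}_Z$: both the endpoint condition $(u,u_t)|_{t=0}=(u_0,u_1)$ and the ball condition $\Norm{(u(t),u_t(t))-(u_0,u_1)}_X\le\delta$ are closed, so $(Z,\norm{\cdot}_Z)$ is a complete metric space. Moreover, by construction $\Phi(u)(0)=u_0$ and $\p_t\Phi(u)(0)=\Psi(u)(0)=\int_{M_1}H(\cdot,y,0)u_1(y)\,dy=u_1$, so every $\Phi(u)$ already meets the prescribed initial data. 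It remains to show (i) $\Phi(Z)\subseteq Z$ and (ii) $\Phi$ is a contraction, both for $T$ small; the resulting fixed point $u$ is then the desired solution of~(\ref{a:1}), equivalently~(\ref{equ:4}).

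For (i) I would use two standard facts about the heat semigroup $e^{\sigma\Delta}$ on the compact manifold $M_1$: it is bounded on each $C^j$ uniformly for $\sigma$ in bounded intervals, and it gains half a derivative with the rate $\Norm{e^{\sigma\Delta}g}_{C^{j+1}}\le C\sigma^{-1/2}\Norm{g}_{C^j}$. By Lemma~\ref{a1}, $F$ is Lipschitz, hence bounded, on the bounded set $\{(u(t),u_t(t)):u\in Z,\ t\in[0,T]\}\subset X$, with bound $M_\delta$ say. Writing $\Psi(u)(t)=e^{\ep t\Delta}u_1+\int_0^t e^{\ep(t-s)\Delta}F(u(s))\,ds$, one estimates $\Norm{\Psi(u)(t)-u_1}_{C^2}\le\Norm{(e^{\ep t\Delta}-I)u_1}_{C^2}+C\ep^{-1/2}M_\delta\,t^{1/2}$, which tends to $0$ as $t\to0$ since $u_1$ is smooth. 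For the $C^3$-bound on $\Phi(u)(t)-u_0=\int_0^t\Psi(u)(s)\,ds$ the only delicate term is the Duhamel part, which by Fubini equals $\int_0^t\big(\int_0^{t-r}e^{\ep\sigma\Delta}\,d\sigma\big)F(u(r))\,dr$; here one uses the identity $\nabla^2 e^{\ep\sigma\Delta}=\ep^{-1}\p_\sigma e^{\ep\sigma\Delta}$ (up to lower-order curvature terms if $M_1$ is not flat) to get $\nabla^3\int_0^{\tau}e^{\ep\sigma\Delta}\,d\sigma\,g=\ep^{-1}(e^{\ep\tau\Delta}-I)\nabla g$, so the top derivative is controlled by $\ep^{-1}\Norm{F(u(r))}_{C^1}$ and is integrable in $r$. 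Altogether $\Norm{\Phi(u)(t)-u_0}_{C^3}+\Norm{\Psi(u)(t)-u_1}_{C^2}\le C_{\ep}(t+t^{1/2})(M_\delta+\Norm{u_1}_{C^3})$, which is $\le\delta$ once $T=T(\delta,u_0,u_1,\ep)$ is small enough; hence $\Phi(u)\in Z$.

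For (ii), given $u,v\in Z$ the $u_1$-terms cancel in $\Psi(u)-\Psi(v)$ and the $u_0$-terms in $\Phi(u)-\Phi(v)$, leaving $\Psi(u)(s)-\Psi(v)(s)=\int_0^s e^{\ep(s-r)\Delta}[F(u(r))-F(v(r))]\,dr$ and $\Phi(u)(t)-\Phi(v)(t)=\int_0^t\big(\int_0^{t-r}e^{\ep\sigma\Delta}\,d\sigma\big)[F(u(r))-F(v(r))]\,dr$. By Lemma~\ref{a1}, $\Norm{F(u(r))-F(v(r))}_Y\le C(1+2\sup_Z\Norm{\cdot}_X^2)\Norm{u-v}_Z=:L_\delta\Norm{u-v}_Z$. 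Feeding this into exactly the same heat-kernel estimates as in (i) gives $\Norm{\Phi(u)-\Phi(v)}_Z\le C_{\delta,\ep}(T+T^{1/2})\Norm{u-v}_Z$, so whenever $T$ is small enough that $C_{\delta,\ep}(T+T^{1/2})<1$ the map $\Phi$ is a contraction, and the contraction mapping principle yields the unique fixed point.

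The main obstacle is the mismatch between the regularity carried by the unknown and that of the right-hand side: $F$ maps into $Y=C^1$ only, whereas $Z$ asks for three spatial derivatives of $u$ and two of $u_t$. One cannot recover the top ($C^3$) derivative of $\Phi(u)$ from pointwise-in-time heat smoothing alone — the naive bound $\int_0^s(\ep(s-r))^{-1}\,dr$ diverges — so the argument must exploit the time-integrated operator $\int_0^\tau e^{\ep\sigma\Delta}\,d\sigma\sim\ep^{-1}\Delta^{-1}(I-e^{\ep\tau\Delta})$, which behaves like an elliptic inverse and supplies the missing derivative while still leaving a positive power of $T$ in front of every estimate. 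The curvature correction terms on a non-flat $M_1$, and the extension of $A,J,V$ to a tubular neighborhood of $N$ (needed because $Z$ only forces $u$ to stay $C^0$-close to $u_0$), are routine and do not affect this scheme.
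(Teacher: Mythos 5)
Your proof follows the same fixed-point strategy as the paper's: Banach's theorem on the closed set $Z$, with the contraction constant coming from heat-kernel smoothing estimates combined with the local Lipschitz property of $F$ established in Lemma~\ref{a1}. You supply two pieces that the paper's single chain of inequalities leaves implicit — the verification that $\Phi$ actually maps $Z$ into itself, and an explicit account of how the top-order $C^3$ bound on $\Phi(u)$ is recovered from $F\in Y=C^1$ by passing through the time-integrated operator $\int_0^\tau e^{\ep\sigma\Delta}\,d\sigma$ (which supplies the missing derivatives via $\Delta\int_0^\tau e^{\ep\sigma\Delta}\,d\sigma = \ep^{-1}(e^{\ep\tau\Delta}-I)$) rather than by pointwise-in-time smoothing alone, which would give a non-integrable $(\ep(s-r))^{-1}$ kernel — and both are worthwhile clarifications of the paper's terse $t^{-\alpha}$ bound.
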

\begin{proof}
For any $u,v\in Z$, we use the estimates of the heat kernel and
Lemma~\ref{a1} to get
\begin{align*}
\norm{\Phi(u)-\Phi(v)}_Z &\le \sup_{t\in[0,T]}\norm{(\Phi(u)-\Phi(v),\Psi(u)-\Psi(v))}_X\\
&\le \sup_{t\in[0,T]}(\norm{\int_0^t \Psi(u(s))-\Psi(v(s))ds}_{C^3}+ \norm{\Psi(u(t))-\Psi(v(t)}_{C^2})\\
&\le \sup_{t\in[0,T]}\int_0^t Ct^{-\alpha}\norm{F(u,u_t)-F(v,v_t)}_Y ds\\
&\le CT^{1-\alpha}\delta\sup_{t\in[0,T]} \norm{u(t)-v(t)}_X,
\end{align*}
where $\alpha\in(0,1)$ is a constant. Clearly if $T$ is small,
$\Phi$ is a contraction of $Z$.
\end{proof}

Then by the Banach fixed point theorem, $\Phi$ has a unique fixed
point $u\in Z$, which is a local solution to equation~(\ref{a:1}).
The regularity can be easily deduced from the property of the heat
kernel.


\end{document}